\newtheorem{defi}{Definition}[section] 
\newtheorem{thm}[defi]{Theorem}
\newtheorem{rem}[defi]{Remark}
 \newtheorem{prop}[defi]{Proposition}
 \newtheorem{lemma}[defi]{Lemma}
\newcommand{\R}{\mathbb R}
\newcommand{\N}{\mathbb N}
\title[Compact manifolds with large Steklov eigenvalues]{Compact manifolds with fixed boundary and large Steklov eigenvalues}
\author{Bruno Colbois}
\address{Universit\'e de Neuch\^atel, Institut de Math\'ematiques, Rue
  Emile-Argand 11, CH-2000 Neuch\^atel, Switzerland}
\email{bruno.colbois@unine.ch}
\author{Ahmad El Soufi}
\thanks{During the first week of 2017, AG and BC were supposed to
travel to Tours and work with Ahmad El Soufi to complete this paper. We learned just a few
days before our visit of his untimely death. Ahmad was a colleague and a
friend. He will be dearly missed.}
\address{Laboratoire de Math\'{e}matiques et Physique Th\'{e}orique,
  UMR-CNRS 6083, Universit\'{e} François Rabelais de Tours, Parc de
  Grandmont, 37200 Tours, France} 
\email{elsoufi@univ-tours.fr}
\author{Alexandre Girouard}
\address{Département de mathématiques et de statistique, 
Pavillon Alexandre-Vachon, Université Laval,
Québec, QC, G1V 0A6, Canada}
\email{Alexandre.Girouard@ulaval.ca}
\begin{document}
\begin{abstract}
  Let $(M,g)$ be a compact Riemannian manifold with boundary. Let
  $b>0$ be the number of connected components of its boundary. For
  manifolds of dimension $\geq 3$, we prove that for $j=b+1$ it is
  possible to obtain an arbitrarily large Steklov eigenvalue
  $\sigma_j(M,e^\delta g)$ using a conformal perturbation $\delta\in
  C^\infty(M)$ which is supported in a thin neighbourhood of the
  boundary, with $\delta=0$ on the boundary. For $j\leq b$, it is also
  possible to obtain arbitrarily large eigenvalues, but the conformal
  factor must spread throughout the interior of $M$.
  In fact, when working in a fixed conformal class and for $\delta=0$
  on the boundary, it is known
  that the volume of $(M,e^\delta g)$ has to tend to infinity in order
  for some $\sigma_j$ to become arbitrarily large. 
  This is in stark
  contrast with the situation for the eigenvalues of the Laplace
  operator on a closed manifold, where a conformal factor that is large enough for the volume
  to become unbounded results in the spectrum collapsing to 0.
  We also prove that
  it is possible to obtain large Steklov eigenvalues while keeping different
  boundary components arbitrarily close to each other, by
  constructing a convenient Riemannian submersion.
\end{abstract}
\maketitle


\section{Introduction}

The Steklov eigenvalues of a smooth compact connected Riemannian
manifold $(M,g)$ of dimension $n+1\geq 2$ with boundary $\Sigma$ are the real
numbers $\sigma$ for which there exists a non-zero harmonic function
$f:M\rightarrow\R$ which satisfies $\partial_\nu f=\sigma f$ on the
boundary $\Sigma$. Here and further $\partial_\nu$ is the outward
normal derivative on $\Sigma$. It is well known that the Steklov
eigenvalues form a discrete spectrum $0=\sigma_1<\sigma_2\leq
\sigma_3\leq\cdots\nearrow\infty$, where each eigenvalue is repeated
according to its multiplicity. The interplay between the geometry of
$M$ and the Steklov spectrum has recently attracted substantial
attention. See~\cite{gpsurvey} and the references therein for recent
development and open problems.

Many developments linking the Steklov eigenvalues of a
compact manifold $M$ with the eigenvalues $\lambda_j$ of the Laplace
operator on its boundary have appeared. See for instance
\cite{wangxia2,ceg2} and more recently \cite{prostu}, where it was
proved that for any Euclidean domain $\Omega\subset\R^{n+1}$ with
smooth boundary, there exists a constant $c_\Omega>0$ such that
$$\lambda_j\leq \sigma_j^2+2c_\Omega\sigma_j,\qquad\mbox{and}\qquad
\sigma_j\leq c_\Omega+\sqrt{c_\Omega^2+\lambda_j}.$$
These results indicate a strong link between the Steklov eigenvalues
of a manifold and the geometry of its boundary. {See also \cite{cgh,Xi} for recent similar results on Riemannian manifolds.}
In fact, on smooth surfaces the spectral asymptotics is
completely determined by the geometry of the boundary \cite{GPPS}.


\medskip
In the present paper, we investigate the following question:
\smallskip

\emph{For a given closed Riemannian manifold $\Sigma$, how large can $\sigma_2(M)$ be among compact Riemannian manifolds $M$ with boundary isometric to $\Sigma$?}

\smallskip
For a manifold $(M,g)$ of dimension $n+1\geq 3$ with boundary
$\Sigma$, we will see that it is possible to make $\sigma_2$
arbitrarily large by using conformal perturbations $g'=h^2g$ such that
$h=1$ on $\Sigma$. Of course, this imply that any eigenvalue
$\sigma_j$ becomes arbitrarily large under such a conformal
perturbation, but the situation is more interesting than that.
Indeed, let $b>0$ be the number of connected components. We will prove
that it is possible to make $\sigma_{b+1}$ arbitrarily large by using a conformal
perturbation $h^2g$ where $h$ is a smooth function which is different
from 1 only in a thin strip located arbitrarily close to the
boundary (with $h=1$ identically on $\Sigma$). It is also possible to
make lower eigenvalues $\sigma_j$ arbitrarily large, but this requires conformal
perturbations which penetrates deeply into the manifold $M$. 
 
One could also ask how small an eigenvalue $\sigma_j(M)$ can be. This
question is easier, as it is relatively easy to construct small
eigenvalues while keeping the boundary fixed. On surfaces, it is
sufficient to create thin passages (see Figure 3 of \cite[Section
4]{gpsurvey}) while for manifolds of dimension $\geq 3$, one can use a
conformal perturbation supported inside the manifold $M$. See
Proposition \ref{proposition:smalleigenvalues}. 

\subsubsection*{Large eigenvalues on surfaces} 
It was proved in \cite{kokarev} that any compact surface $M$ with boundary of length $L>0$ satisfies
\begin{gather}\label{ineq:kokarev}
\sigma_2(M)\leq\frac{8\pi}{L}(1+\mbox{genus}(M)).
\end{gather}
In \cite{cgr}, a sequence of surfaces $(M_l)_{l\in\N}$ with one boundary component of fixed length $L>0$ was constructed, which satisfies
$$\lim_{l\rightarrow\infty}\sigma_2(M_l)=+\infty.$$
These two results give a complete answer to our initial question for surfaces: it is possible to obtain arbitrarily large $\sigma_2$, but it is necessary to increase the genus of $M$ in order to do so.

\subsubsection*{Manifolds of higher dimensions}
For any compact Riemannian manifold $(M,g)$ of dimension $\geq 3$ with boundary $\Sigma$, we will show that a conformal perturbation is sufficient to obtain arbitrarily large $\sigma_2$. More can be said: let $b>0$ be the number of connected components of the boundary $\Sigma$. The next theorem shows that it is possible to make $\sigma_{b+1}$ arbitrarily large using conformal perturbations $g_\varepsilon$ which are supported in an arbitrary neighbourhood of the boundary $\Sigma$, and which coincide with $g$ on the boundary. It is also possible to make $\sigma_2$ large, but this requires conformal perturbations away from the boundary (See Proposition \ref{prop:necesbsmall}). The next theorem is the main result of this paper.

\begin{thm}\label{thm:main}
Let $(M,g)$ be a compact connected Riemannian manifold of dimension $\geq 3$ with  $b\in\N$ boundary components 
$\Sigma_1,\cdots,\Sigma_b$. 

\smallskip
\noindent (i) For every neighborhood $V$ of $\Sigma$, there exists a one-parameter family of 
 Riemannian metrics $g_\varepsilon $  conformal to $g$  which coincide with $g$ on $\Sigma$ and in $M\setminus V $, such that 
 $$\sigma_{b+1}(g_\varepsilon) \to\infty \quad as\quad \varepsilon\to 0.$$
 
\smallskip
\noindent (ii) There exists a one-parameter family of 
 Riemannian metrics $g_\varepsilon$  conformal to $g$  which coincide with $g$ on $\Sigma$ such that 
 $$\sigma_{2}(g_\varepsilon) \to\infty \quad as\quad \varepsilon\to 0.$$
\end{thm}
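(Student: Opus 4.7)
The plan is to construct $g_\varepsilon = h_\varepsilon^2 g$ where $h_\varepsilon$ is a smooth conformal factor with $h_\varepsilon \equiv 1$ on $\Sigma$ and $h_\varepsilon \equiv 1$ outside $V$, but $h_\varepsilon \sim \varepsilon^{-1}$ throughout the bulk of $V$. Since $h_\varepsilon|_\Sigma \equiv 1$ the induced boundary measure is unchanged, so the Steklov Rayleigh quotient associated with $g_\varepsilon$ reads
\[
R_\varepsilon(u) = \frac{\int_M h_\varepsilon^{n-1}\,|\nabla u|_g^2\,dv_g}{\int_\Sigma u^2\, ds_g},
\]
and the amplification factor $h_\varepsilon^{n-1}$ is positive because $\dim M = n+1 \geq 3$ gives $n-1\geq 1$. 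This is the structural reason the argument works only in dimension at least $3$.

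For (i), I would work in boundary normal coordinates identifying a tubular neighborhood of $\Sigma$ with $\Sigma\times[0,\delta]\subset V$, and set $h_\varepsilon(x,t)=f_\varepsilon(t)$ with $f_\varepsilon(0)=1$, $f_\varepsilon\equiv\varepsilon^{-1}$ on $[\varepsilon,\delta/2]$, and $f_\varepsilon\equiv 1$ for $t\geq 3\delta/4$, smoothly interpolated. Apply the max-min characterization
\[
\sigma_{b+1}(g_\varepsilon) = \sup_{\mathrm{codim}\, W\,\leq\, b}\ \inf_{0\neq u\in W} R_\varepsilon(u)
\]
with $W=\{u\in H^1(M) : \int_{\Sigma_i}u\,ds=0,\ i=1,\ldots,b\}$. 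For $u\in W$, the target bound is $\int_\Sigma u^2\,ds \leq C\varepsilon \int_M h_\varepsilon^{n-1}|\nabla u|^2\,dv_g$, from which $R_\varepsilon(u)\geq c/\varepsilon\to\infty$. The key ingredients are: (a) Poincaré on each ``bulk'' cylinder $A_i=\Sigma_i\times[\varepsilon,\delta/2]$ combined with $h_\varepsilon\equiv\varepsilon^{-1}$ there, giving $\int_{A_i}(u-\bar v_i)^2\leq C\varepsilon^{n-1}\int_M h_\varepsilon^{n-1}|\nabla u|^2$ with $\bar v_i$ the mean of $u$ on $A_i$; (b) a mean-value argument producing a cross-section $\Sigma_i\times\{t_i^*\}$ that inherits this smallness; (c) a weighted Cauchy--Schwarz along normal rays,
\[
(u(x,0)-u(x,t_i^*))^2\leq \left(\int_0^{t_i^*}h_\varepsilon^{1-n}\,dt\right)\left(\int_0^{t_i^*}h_\varepsilon^{n-1}(\partial_t u)^2\,dt\right),
\]
whose first factor is $O(\varepsilon)$; and (d) using the mean-zero condition together with another weighted Cauchy--Schwarz applied to $t\mapsto\int_{\Sigma_i}u(\cdot,t)\,ds$ to control $\bar v_i$.

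The main difficulty is the outer transition layer $\Sigma\times[0,\varepsilon]$ where $h_\varepsilon^{n-1}$ is not uniformly large; the weighted Cauchy--Schwarz above absorbs the bad region by turning the weight into $h_\varepsilon^{1-n}$, whose integral stays small provided the layer thickness shrinks with $\varepsilon$. For Part (ii), when $b=1$ the statement reduces to (i) since $\sigma_2=\sigma_{b+1}$. When $b\geq 2$, the $b-1$ approximate eigenfunctions obtained by assigning distinct constants to different boundary components would carry small Rayleigh quotients as long as $h_\varepsilon$ remains bounded in the bulk of $M$; to force $\sigma_2\to\infty$ the conformal perturbation must be extended into the bulk of $M$ to couple the boundary components, and the same type of weighted Cauchy--Schwarz estimate then yields the bound on the codimension-one subspace $\{u:\int_\Sigma u\,ds=0\}$.
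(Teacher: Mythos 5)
Your proposal is correct in its key mechanisms, and for part (i) it follows a genuinely different route from the paper. The paper first reduces to the case where $\Sigma$ has a metric product collar $\Sigma\times[0,L)$ (via Fermi coordinates and a quasi-isometry step, Proposition \ref{proposition:quasiisocontrol}), then decomposes the test function $f(x,t)=\sum_j a_j(t)\phi_j(x)$ in a Fourier basis of boundary eigenfunctions, and runs a dichotomy mode-by-mode: either $a_j$ stays large on the band where $h_\varepsilon$ is big (so the tangential term $\lambda_j a_j^2 h_\varepsilon^{n-1}$ is expensive), or $a_j$ drops quickly (so the radial term $a_j'^2 h_\varepsilon^{n-1}$ is expensive); for $j\ge b+1$ one has $\lambda_j\ge\lambda_{b+1}>0$ and both cases give $\gtrsim a_j(0)^2/\varepsilon$. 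Your scheme instead works with the function $u$ itself: a Neumann–Poincaré inequality on each bulk cylinder $A_i$, a mean-value selection of a good cross-section, and weighted Cauchy–Schwarz along normal rays with the weight $h_\varepsilon^{1-n}$ absorbing the transition layer. This is arguably cleaner and has the advantage that it does not require the cross-sections $\Sigma\times\{t\}$ to carry a $t$-independent metric, so the quasi-isometry reduction step could in principle be bypassed by working directly in Fermi coordinates. Both proofs hinge on the same structural fact you identified: the conformal weight in the Dirichlet energy is $h_\varepsilon^{n-1}$, which is nontrivial only in dimension $n+1\ge 3$.

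For part (ii) your sketch identifies the right obstruction (the $b-1$ test functions assigning constants to boundary components remain cheap if $h_\varepsilon$ is bounded inside), and the right fix (push $h_\varepsilon\sim\varepsilon^{-1}$ into the interior of $M$), but the actual estimate is left implicit. The missing step is how the single linear constraint $\int_\Sigma u\,ds=0$ controls \emph{each} bulk average $\bar v_i$, not merely their weighted sum. What is needed is a Neumann–Poincaré inequality on a fixed interior core $\Omega$, giving $|\bar v_i-\bar v|^2\lesssim\varepsilon^{n-1}R_\varepsilon(u)$ with $\bar v$ the mean over $\Omega$; combined with the transfer estimates from the collars, $\int_\Sigma u\,ds=0$ then forces $\bar v^2\lesssim\varepsilon R_\varepsilon(u)$ and hence $\bar v_i^2\lesssim\varepsilon R_\varepsilon(u)$. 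This is exactly the role of Lemma \ref{lemNeu} and the case analysis on $\sum_{j\le b}a_j(0)^2$ in the paper's Proposition \ref{Conf2}. Your outline would go through once this bridge is written out, but as stated it is a gap rather than a complete argument.
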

The proof of Theorem \ref{thm:main} will be presented in Section \ref{section:largesteklovfixedbdry}.
It is important to note that in order to obtain large eigenvalues, it is necessary to perturb the metric near each points of the boundary.
\begin{prop}\label{proposition:bdrynecessary}
Let $(M,g)$ be a compact Riemannian manifold with boundary $\Sigma$. Let $p\in\Sigma$ and let $\varepsilon>0$. Then any Riemannian metric $g'$ on $M$ which {coincides} with $g$ on $B(p,\varepsilon)\subset M$ {satisfies
$$\sigma_k(M,g')\leq \lambda_k^D(B(p,\varepsilon),g),$$
where $\lambda_k^D(B(p,\varepsilon),g)$ is the $k$-th eigenvalue of a
mixed Steklov-Dirichlet eigenvalue problem.}
\end{prop}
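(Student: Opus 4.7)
The strategy is a standard variational (min-max) comparison: exhibit a $k$-dimensional subspace of $H^1(M)$ on which the Rayleigh quotient for the Steklov problem on $(M,g')$ is dominated by $\lambda_k^D(B(p,\varepsilon),g)$, then invoke the min-max characterization of $\sigma_k$.

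First I would fix notation. Set $\Omega := B(p,\varepsilon)$ and split $\partial\Omega = \partial_S\Omega \cup \partial_D\Omega$, where $\partial_S\Omega := \partial\Omega \cap \Sigma$ is the portion lying in the original boundary and $\partial_D\Omega := \partial\Omega \setminus \Sigma$ is the interior-facing part. The relevant mixed problem seeks harmonic $u$ on $\Omega$ with $\partial_\nu u = \lambda u$ on $\partial_S\Omega$ and $u = 0$ on $\partial_D\Omega$. Its spectrum is discrete and characterized by
\[
\lambda_k^D(\Omega,g) = \inf_W \sup_{0\neq u\in W} \frac{\int_\Omega|\n u|^2\,dv_g}{\int_{\partial_S\Omega} u^2\,ds_g},
\]
with $W$ ranging over $k$-dimensional subspaces of $\{u\in H^1(\Omega) : u|_{\partial_D\Omega}=0\}$.

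Next, let $u_1,\dots,u_k$ be the first $k$ eigenfunctions of this mixed problem on $\Omega$ and let $\tilde u_i$ denote the extension of $u_i$ by zero to all of $M$. The Dirichlet condition on $\partial_D\Omega$ is precisely what guarantees $\tilde u_i \in H^1(M)$ (zero-extension across $\partial_D\Omega$ preserves weak differentiability by the standard trace/approximation argument). The $\tilde u_i$ are linearly independent, so $V := \mathrm{span}(\tilde u_1,\dots,\tilde u_k)$ is a $k$-dimensional subspace of $H^1(M)$. Because $g'=g$ on $\Omega$ and each $f \in V$ vanishes on $M\setminus\Omega$, one has
\[
\int_M|\n f|^2\,dv_{g'} = \int_\Omega|\n f|^2\,dv_g, \qquad \int_\Sigma f^2\,ds_{g'} = \int_{\partial_S\Omega} f^2\,ds_g,
\]
so the Rayleigh quotient of $f$ for the Steklov problem on $(M,g')$ coincides with its Rayleigh quotient for the mixed problem on $(\Omega,g)$. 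Substituting $V$ into the min-max for $\sigma_k(M,g')$ yields the claimed bound.

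There is no real obstacle: the only points requiring care are choosing the correct partition of $\partial\Omega$ into Steklov and Dirichlet parts, and verifying that zero-extension across $\partial_D\Omega$ remains in $H^1(M)$, both of which are routine. The argument is structurally identical to the classical domain-monotonicity proof for Dirichlet eigenvalues, transported to the Steklov setting via the mixed boundary condition.
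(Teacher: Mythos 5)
Your proof is correct and uses essentially the same argument as the paper: extend the first $k$ mixed Steklov-Dirichlet eigenfunctions of $B(p,\varepsilon)$ by zero (which lands in $H^1(M)$ thanks to the Dirichlet condition on $\partial B(p,\varepsilon)\setminus\Sigma$), and plug their span into the min-max characterization of $\sigma_k(M,g')$. The paper's proof is just a terser version of exactly this.
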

The proof of this observation is an exercise in the use of min-max characterisations of eigenvalues. It will be presented in Section \ref{section:variationalchar}.

\medskip
The conformal perturbations which are used in the proof of Theorem
\ref{thm:main} are such that the volume $|M|_{g_\varepsilon}$ tends to
infinity as $\varepsilon\to 0$. This is a necessary condition when
working in a fixed conformal class $[g]$. Indeed, the following
inequality for $g'\in[g]$ was proved in \cite{asma}: 
$$\sigma_k(M,g')|\Sigma|^{1/n}<\frac{A+Bk^{\frac{2}{n+1}}}{I(M)^{(n-1)/n}},$$
where $A$ is a constant which depends on $g$, $B$ depends on the dimension and $I(M)$ is the isoperimetric ratio
$$I(M)=\frac{|\Sigma|_{g'}}{|M|_{g'}^{n/(n+1)}}.$$
In each of the constructions, the diameter also becomes unbounded.
In Theorem~\ref{thm:warpedprod}, we
construct a sequence $g_m$ of Riemannian metrics on $M$ such that
$g_m\bigl|\bigr._\Sigma=g_\Sigma$ and such that  $(M,g_m)$ has
uniformly bounded diameter and $\sigma_2$ becomes arbitrarily large.  

To conclude this introduction, note that it is difficult to obtain lower bounds for Steklov eigenvalues. Under relatively strong convexity assumptions this was already investigated by Escobar in \cite{esco1}. More recently Jammes \cite{jammes1} proposed an interesting inequality in the spirit of Cheeger:
$$\sigma_2(M)\geq\frac{h(M)j(M)}{4}.$$
Here $h(M)$ is the classical Cheeger constant and $j(M)$ was introduced in \cite{jammes1}.
It is challenging to obtain effective lower bounds on $\sigma_2(M)$ using this inequality, mainly because it is difficult to estimate $h(M)$ and $j(M)$. Moreover, it it is interesting to note that the metrics which we construct in Theorem \ref{thm:main} and \ref{thm:warpedprod} have small Cheeger and Cheeger-Jammes constants, despite the eigenvalue $\sigma_2$ being arbitrarily large.

\subsection{Plan of the paper}
In the next section, we present the variational characterization of
the Steklov and mixed Steklov-Dirichlet eigenvalue problems and deduce
some simple consequences. In Section
\ref{section:largesteklovfixedbdry} we prove the main results of the
paper by first working in cylinders and then using quasi-isometric
control of eigenvalues to obtain Theorem \ref{thm:main}. We also prove
Theorem \ref{thm:warpedprod} which provides an example where two
boundary components are arbitrarily close to each other.

\section{Variational characterisation and quasi-isometric control of eigenvalues}
\label{section:variationalchar}
Let $M$ be a smooth compact Riemannian manifold with boundary
$\Sigma$. Let $\mathcal{H}_k(M)$ be the set of $k$-dimensional linear
subspaces of $C^\infty(M)$. 
It is well known that the Steklov eigenvalue $\sigma_k$ is given by
\begin{gather}\label{varchar:steklov}
 \sigma_k(M,g)=\min_{E\in\mathcal{H}_k}\max_{0\neq f\in E}\frac{\int_M|df|_g^2\,dv_g}{\int_\Sigma|f|^2\,dv_g},
\end{gather}
where $dv_g$ is the volume form. It the following we will use conformal metrics of the form $g'=h^2g$, where $h$ is a smooth function on $M$ such that $h=1$ identically on the boundary $\Sigma$. In the min-max characterization of $\sigma_k(M,h^2g)$, the denominator is the same as above, while the numerator is
$$\int_M|df|_{g'}^2\,dv_{g'}=\int_M|df|_g^2h^{n-1}\,dv_g.$$
In the following, we will often write $|df|$ for $|df|_g$.
 
 \medskip
We seize this opportunity to prove one of the simple statement from the introduction.
\begin{prop}\label{proposition:smalleigenvalues}
  Let $M$ be a compact smooth Riemannian manifold of dimension $n+1$ at least
  3, with boundary $\Sigma$.  For each $p\in\Sigma$ and each
  $\varepsilon>0$, there exists a sequence of conformal deformations
  $h_m^2g$ such that $h_m>0$ is a smooth function which is identically equal to 1
  on $\Sigma$ and on the complement of the ball $B(p,\varepsilon)\subset
  M$, and such that $\lim_{m\rightarrow\infty}\sigma_k(M,h_m^2g)=0$ for
  each $k\in\N$.  
\end{prop}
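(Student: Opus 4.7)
My plan is to make the Steklov Rayleigh quotient tend to $0$ on an $m$-dimensional space of test functions supported near $p$, by letting the conformal factor shrink in the interior of $B(p,\varepsilon)$ while staying equal to $1$ on $\Sigma$. Since $h_m\equiv 1$ on $\Sigma$, the denominator $\int_\Sigma f^2\,dv_g$ in \eqref{varchar:steklov} is unaffected by the perturbation, so all the work lies in showing $\int_M|df|^2\,h_m^{n-1}\,dv_g\to 0$ on appropriate test functions with nontrivial boundary trace.

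I will work in Fermi coordinates $(x,t)\in U\times[0,T)$ around $p$, with $T$ small enough that the tube lies in $B(p,\varepsilon)$; the metric there is $dt^2+g_t$ with $\sqrt{\det g_t}$ bounded above and below. Fix $0<c<d<T$ and parameters $\eta_m,\varepsilon_m\to 0$ to be tuned. Define $h_m$ to equal $1$ near $t=0$ (on $[0,\eta_m/2]$), to drop smoothly to $\varepsilon_m$ on $[\eta_m/2,\eta_m]$, to equal $\varepsilon_m$ on the bulk $[\eta_m,d+(T-d)/2]$, and to return smoothly to $1$ before $t=T$; multiply by a fixed $x$-cutoff supported in $U$ so that $h_m\equiv 1$ outside the tube, and hence also on $M\setminus B(p,\varepsilon)$.

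For the test functions, pick $m$ pairwise disjoint geodesic balls $V_1,\dots,V_m\subset U$ of radius $r_m\to 0$ (available since $\dim\Sigma=n\geq 2$), bumps $\phi_i\in C^\infty(\Sigma)$ supported in $V_i$, and a fixed $\psi\in C^\infty([0,T])$ with $\psi\equiv 1$ on $[0,c]$ and $\psi\equiv 0$ on $[d,T]$; set $f_i(x,t)=\phi_i(x)\psi(t)$, extended by zero to $M$. Splitting the $t$-integral into the region $[0,\eta_m]$ (where $h_m^{n-1}$ may be as large as $1$ but the length is $\eta_m$) and $[\eta_m,d]$ (where $h_m=\varepsilon_m$), a direct computation using $|df_i|_g^2=\psi^2|\nabla_x\phi_i|_{g_t}^2+\phi_i^2|\psi'|^2$ yields
\begin{equation*}
\int_M|df_i|^2\,h_m^{n-1}\,dv_g\;\leq\;C\int_\Sigma|\nabla\phi_i|^2\,dv_g\,(\eta_m+\varepsilon_m^{n-1})+C\int_\Sigma\phi_i^2\,dv_g\,\varepsilon_m^{n-1},
\end{equation*}
for a constant $C$ depending only on $g$ and $\psi$. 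Dividing by $\int_\Sigma\phi_i^2\,dv_g$ and using $\int|\nabla\phi_i|^2/\int\phi_i^2\leq C'r_m^{-2}$ for bumps of radius $r_m$, I obtain $R(f_i)\leq C''\bigl(r_m^{-2}(\eta_m+\varepsilon_m^{n-1})+\varepsilon_m^{n-1}\bigr)$.

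The final step is the parameter balance: taking for instance $r_m=m^{-1/n}$ and $\eta_m=\varepsilon_m=m^{-2}$ sends every term above to $0$. The dimensional hypothesis $n\geq 2$ (i.e.\ $\dim M\geq 3$) is exactly what allows $\varepsilon_m^{n-1}$ to beat the $r_m^{-2}$ blow-up of the bumps; in dimension two the factor $h_m^{n-1}=1$ disappears from the Dirichlet energy altogether and no such construction is available. Because $f_1,\dots,f_m$ have pairwise disjoint supports in $M$, the space they span has maximal Rayleigh quotient $\max_iR(f_i)\to 0$, so \eqref{varchar:steklov} gives $\sigma_m(M,h_m^2g)\to 0$, and hence $\sigma_k(M,h_m^2g)\to 0$ for every fixed $k$ by the monotonicity $\sigma_k\leq\sigma_m$. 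The main (and essentially only) subtlety is this scaling; the remainder is geometric bookkeeping.
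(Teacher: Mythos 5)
Your proposal is correct and rests on the same mechanism the paper uses: collapse the conformal factor inside $B(p,\varepsilon)$ while keeping $h_m\equiv 1$ on $\Sigma$ and off the ball, test with functions supported in $B(p,\varepsilon)$ having disjoint boundary traces, and exploit that the Dirichlet energy picks up a factor $h^{n-1}$, which vanishes precisely because $n\geq 2$. The paper's version is softer and shorter: for each fixed $k$ it takes $k$ fixed test functions with disjoint supports in $B(p,\varepsilon)$ (non-vanishing on $\Sigma$) and lets $h_m\to 0$ uniformly on compact subsets of $B(p,\varepsilon)\cap\operatorname{int}M$ with $0<h_m\leq 1$; then $\int_M|df_j|^2h_m^{n-1}\,dv_g\to 0$ by a simple exhaustion argument, with no Fermi coordinates and no parameter balance. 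Your explicit construction with shrinking bumps of radius $r_m$ buys a quantitative rate and the marginally stronger conclusion $\sigma_m(M,h_m^2g)\to 0$ at step $m$, but that extra strength is not required by the statement. One small slip: the $x$-cutoff should \emph{interpolate} $h_m$ toward $1$ outside the tube, e.g.\ $h_m(x,t)=1+\chi(x)\bigl(\tilde h_m(t)-1\bigr)$ with $\chi\equiv 1$ on a set containing all the $V_i$; a multiplicative cutoff as written would force $h_m=0$, not $h_m=1$, off the support of $\chi$.
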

\begin{proof}
Given $\varepsilon>0$, let $p\in\Sigma$ and consider a smooth function
$f\in C^{\infty}(M)$ which is supported in $B(p,\varepsilon)\subset M$
and which does not vanish at $p$. Let $h_m$ be a sequence of positive
smooth functions on $M$ such that $h_m=1$ on $\Sigma$ {and on the complement of $B(p,\varepsilon)$, which satisfies}
$\lim_{m\rightarrow\infty}h_m=0$ uniformly on compact subsets of
$B(p,\varepsilon)\cap\mbox{interior}(M)$. It follows that the conformal
deformations $\tilde{g}_m=h_m^2g$ satisfy 
\begin{gather*}
\lim_{m\rightarrow\infty}\frac{\int_M|df|_{\tilde{g}}^2\,dv_{\tilde{g}_m}}{\int_{\Sigma} f^2\,dv_{\tilde{g}_m}}
=\frac{\int_M|df|_g^2\,h_m^{n-1}dv_{g}}{\int_{\Sigma} f^2\,dv_{g}}=0.
\end{gather*}
Using $k$ functions $f_j\quad (j=1,\cdots,k)$ with disjoint support in $B(p,\varepsilon)$ instead of a single function $f$, the result now follows from the min-max characterization of $\sigma_k$.
\end{proof}

We will also use the following mixed Steklov-Dirichlet problem on a domain $\Omega\subset M$:
$$\Delta f=0\mbox{ in }\Omega,\quad f=0 \mbox{ on }\partial\Omega\setminus\Sigma,\quad
\partial_\nu f=\lambda f\mbox{ on }\partial\Omega\cap\Sigma.$$
It has discrete spectrum
$0<\lambda_1^D\leq\lambda_2^D\leq\cdots\nearrow\infty$.
The $k$-th eigenvalue is given by 
\begin{gather}
 \lambda_k^D=\min_{E\in\mathcal{H}_{k,0}}\max_{0\neq f\in E}\frac{\int_{\Omega}|\nabla f|^2\,dv_g}{\int_{\partial\Omega\cap\Sigma}f^2\,dv_g},
\end{gather}
where $\mathcal{H}_{k,0}=\{E\in\mathcal{H}_k\,:\,f=0\mbox{ on }\partial\Omega\setminus\Sigma\times\{0\}\quad \forall f\in E\}$. 
For more {information} on mixed Steklov problems see for instance \cite{bankulcpolt} and \cite{agranovich}.

\medskip
We are now ready for the proof of Proposition \ref{proposition:bdrynecessary}.
\begin{proof}[Proof of Proposition \ref{proposition:bdrynecessary}]
Let $(\phi_k)$ be a sequence of eigenfunctions corresponding to $\lambda_k^D(B(p,\varepsilon))$, which are extended by 0 elsewhere in $M$. Using the subspace $E_k=\mbox{span}(\phi_1,\cdots,\phi_k)$ in the min-max characterization of $\sigma_k(M,g')$ completes the proof.
\end{proof}

The following proposition is borrowed from \cite{cgr}. It is classical and follows directly from the min-max characterization of the eigenvalues. We believe that this principle was used for the first time in \cite{dodziuk}, in the context of the Laplace-Beltrami operator acting on differential forms.
\begin{prop}\label{proposition:quasiisocontrol}
 Let $M$ be a compact manifold of dimension $n$, with smooth boundary $\Sigma$ and let $g_1,g_2$ be two Riemannian metrics on $M$ which are quasi-isometric with ratio $A\geq 1$, which means that for each $x\in M$ and $0\neq v \in T_xM$ we have
 $$
 \frac{1}{A} \le \frac{g_1(x)(v,v)}{g_2(x)(v,v)} \le A.
 $$
 Then the Steklov eigenvalues with respect to $g_1$ and $g_2$ {satisfy} the following inequality:
 $$
 \frac{1}{A^{2n+1}}\le \frac{\sigma_k(M,g_1)}{\sigma_k(M,g_2)} \le A^{2n+1}.
 $$
\end{prop}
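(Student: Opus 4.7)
The plan is a direct application of the min-max characterization \eqref{varchar:steklov}, combined with pointwise control of the Rayleigh quotient
$$R_g(f) = \frac{\int_M |df|_g^2\,dv_g}{\int_\Sigma f^2\, dv_g}$$
under the change of metric. Since the space of admissible test functions $C^\infty(M)$ and the family $\mathcal{H}_k(M)$ of $k$-dimensional subspaces do not depend on the metric, any multiplicative pointwise comparison of $R_{g_1}(f)$ and $R_{g_2}(f)$ transfers verbatim to the eigenvalues by taking $\max$ over $E$ and then $\inf$ over $E \in \mathcal{H}_k$.

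The pointwise comparison breaks into three purely linear-algebraic estimates at each $x \in M$, all consequences of the quadratic-form inequality $A^{-1}g_2 \le g_1 \le A g_2$ on $T_xM$. First, inverting this matrix inequality on the cone of positive-definite forms yields $A^{-1}g_2^{-1} \le g_1^{-1} \le A g_2^{-1}$, so the dual norms $|df|_{g_i}^2 = g_i^{-1}(df,df)$ are comparable up to a factor $A$. Second, the determinant is monotone under this ordering on $n\times n$ positive-definite matrices, hence $\det g_1$ and $\det g_2$ are comparable up to $A^n$ and the bulk volume forms $\sqrt{\det g_i}\,dx$ up to $A^{n/2}$. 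Third, the induced boundary metrics inherit the same quasi-isometry constant, so the boundary volume forms are controlled by a factor involving the boundary dimension $n-1$ rather than $n$.

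Multiplying these three estimates and collecting exponents gives a two-sided pointwise bound of the form $A^{-(2n+1)} \le R_{g_1}(f)/R_{g_2}(f) \le A^{2n+1}$. Feeding this into \eqref{varchar:steklov}, for every $E \in \mathcal{H}_k$ the quantity $\max_{0\ne f\in E} R_{g_1}(f)$ is bounded by $A^{2n+1}\max_{0\ne f\in E} R_{g_2}(f)$, and taking the infimum over such $E$ yields $\sigma_k(g_1) \le A^{2n+1}\sigma_k(g_2)$. The reverse inequality follows by swapping the roles of $g_1$ and $g_2$.

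There is no real obstacle here; the only thing to be careful about is the bookkeeping of exponents, in particular to distinguish the bulk dimension $n$ from the boundary dimension $n-1$ when comparing the two volume forms, and to collect all four contributions (gradient, bulk volume, boundary volume, and the symmetric lower bound) consistently.
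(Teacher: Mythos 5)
Your approach is the standard one and matches what the cited reference \cite{cgr} does; the present paper does not prove the proposition itself but simply refers to \cite{cgr} (and \cite{dodziuk}) for it. The three pointwise comparisons you identify are the right ones, and pushing them through the min-max formula \eqref{varchar:steklov} is the correct way to transfer the bound to the eigenvalues, so there is no gap. One arithmetical slip is worth flagging: collecting your three factors for the Rayleigh quotient ratio gives $A^{1+n/2+(n-1)/2}=A^{(2n+1)/2}$, not $A^{2n+1}$ as you write, so your argument in fact proves a strictly sharper constant, and the stated $A^{2n+1}$ then follows trivially since $A\geq 1$. (The exponent $2n+1$ would be the natural output if quasi-isometry were measured on the ratio of \emph{norms} $|v|_{g_1}/|v|_{g_2}$ rather than of quadratic forms, which effectively squares $A$.) Also, the ``four contributions'' you mention at the end are really three multiplicative factors together with the symmetry $g_1\leftrightarrow g_2$; that symmetry yields the two-sided bound, it is not a fourth factor to multiply in.
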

Note also that if the metrics $g_1$ and $g_2$ are quasi-isometric with ratio $A\geq 1$, then given a smooth function $h$ on $M$, the two conformal metrics $h^2g_1$ and $h^2g_2$ are also quasi-isometric with the same ratio $A$. This will be useful in the proof of Theorem \ref{thm:main} when going from cylindrical boundaries to arbitrary manifolds.

\section{Large Steklov eigenvalues on manifolds with fixed boundary}
\label{section:largesteklovfixedbdry}

Let $M$ be a compact manifold of dimension $(n+1)$ with $b\geq 1$
boundary components: $\Sigma= \Sigma_1 \cup ... \cup \Sigma_b$. We will prove Theorem \ref{thm:main} by first working under the extra hypothesis that the boundary $\Sigma$ of $M$ has a neighbourhood which is isometric to the product $\Sigma\times [0,L)$ for
some  $L>0$. This is not a strong hypothesis since it is always
satisfied up to a quasi-isometry (See the proof of Theorem \ref{thm:main} below).

\medskip
In the present context, we {denote} $g_0$ the restriction of the Riemannian metric $g$ to the boundary $\Sigma$, and $d_0$ the corresponding exterior derivative on $C^\infty(\Sigma)$. The spectrum of the Laplace operator on $\Sigma$ is denoted 
$$0=\lambda_1=\cdots=\lambda_b<\lambda_{b+1}\leq \cdots\rightarrow+\infty.$$
Theorem \ref{thm:main} will follow from Proposition \ref{Conf1} and Proposition \ref{Conf2} below.
\begin{prop}\label{Conf1}
  Let $(M,g)$ be a Riemannian manifold of dimension $n+1\ge 3$, with boundary
  $\Sigma$ and assume that there exists a neighborhood $V$  of
  $\Sigma$ which is isometric to the product $\Sigma\times [0,L)$ for
  some  $L>0$.   
 For every  $\varepsilon>0$ sufficiently small, there
  exists a Riemannian metric $g_\varepsilon=h_\varepsilon^2g$  conformal
  to $g$  which coincides with $g$ in the complement of
  $\Sigma\times (\varepsilon, 4\varepsilon) $ 
  and such that
  $$\sigma_{b+1}(g_\varepsilon) \ge \frac A \varepsilon,$$
  where $A=\frac 1{4}\min\{ \lambda_{b+1}(\Sigma), \frac1{4}\}>0$.
\end{prop}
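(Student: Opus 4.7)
The plan is to apply the variational characterization~\eqref{varchar:steklov}. Introduce the $b$ linear functionals $L_i(f):=\int_{\Sigma_i}f\,dv_{g_0}$ on $C^\infty(M)$; any $(b+1)$-dimensional subspace of $C^\infty(M)$ must contain a nonzero element of $\bigcap_{i=1}^b\ker L_i$, so the min-max principle reduces the proposition to proving the Rayleigh-quotient lower bound
\begin{equation*}
\int_M h_\varepsilon^{n-1}|df|_g^2\,dv_g\ge\frac{A}{\varepsilon}\int_\Sigma f^2\,dv_{g_0}
\end{equation*}
for every smooth $f$ whose mean over each $\Sigma_i$ vanishes.

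For the conformal factor I would take $h_\varepsilon(x,t)=\eta_\varepsilon(t)$ depending only on the normal coordinate $t$ in the collar: smooth, identically $1$ on $[0,\varepsilon]\cup[4\varepsilon,L)$, identically $\varepsilon^{-2/(n-1)}$ on the tube $T_\varepsilon:=\Sigma\times[2\varepsilon,3\varepsilon]$, smoothly monotone on the transition intervals $[\varepsilon,2\varepsilon]$ and $[3\varepsilon,4\varepsilon]$, and everywhere $\ge 1$; extend by $1$ outside the collar. The two key features are $h_\varepsilon^{n-1}=\varepsilon^{-2}$ on $T_\varepsilon$, which makes the tube energetically very expensive and forces low-energy test functions to be nearly $t$-independent there, and $h_\varepsilon\ge 1$ throughout $M$, which controls the derivatives of $f$ on the rest of the collar.

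The estimate then proceeds in three steps. \emph{(i)} Chebyshev's inequality applied to $t\mapsto\int_\Sigma|d_0 f(\cdot,t)|^2\,dv_{g_0}$ on $[2\varepsilon,3\varepsilon]$ produces a slice $t_*\in[2\varepsilon,3\varepsilon]$ with
\[
\int_\Sigma|d_0 f(\cdot,t_*)|^2\,dv_{g_0}\le\tfrac{1}{\varepsilon}\int_{T_\varepsilon}|df|_g^2\,dv_g\le\varepsilon\int_M h_\varepsilon^{n-1}|df|_g^2\,dv_g,
\]
the last inequality using $h_\varepsilon^{n-1}=\varepsilon^{-2}$ on $T_\varepsilon$. \emph{(ii)} Decompose $f(\cdot,t_*)=\tilde f(\cdot,t_*)+\sum_i\bar f_i(t_*)\chi_{\Sigma_i}$, where $\tilde f(\cdot,t_*)$ has mean zero on each $\Sigma_i$. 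Poincar\'e's inequality on $\Sigma$ yields $\int_\Sigma\tilde f(\cdot,t_*)^2\le\lambda_{b+1}(\Sigma)^{-1}\int_\Sigma|d_0 f(\cdot,t_*)|^2$, which via step~\emph{(i)} is $\le\lambda_{b+1}(\Sigma)^{-1}\varepsilon\int_M h_\varepsilon^{n-1}|df|_g^2$. The means $\bar f_i(t_*)$ are \emph{not} themselves zero, but they are recovered from $L_i(f)=0$ by writing $|\Sigma_i|\,\bar f_i(t_*)=\int_{\Sigma_i\times[0,t_*]}\partial_s f\,dv_g$ and applying Cauchy--Schwarz; using $h_\varepsilon\ge 1$ on $[0,t_*]$ and $t_*\le 3\varepsilon$ this gives $\sum_i|\Sigma_i|\,\bar f_i(t_*)^2\le 3\varepsilon\int_M h_\varepsilon^{n-1}|df|_g^2$. \emph{(iii)} The identity $f(x,0)=f(x,t_*)-\int_0^{t_*}\partial_s f\,ds$ and one more Cauchy--Schwarz yield $\int_\Sigma f^2\,dv_{g_0}\le 2\int_\Sigma f(\cdot,t_*)^2\,dv_{g_0}+6\varepsilon\int_M h_\varepsilon^{n-1}|df|_g^2$. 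Adding the three bounds produces an inequality of the form $\int_\Sigma f^2\le C(\lambda_{b+1}(\Sigma))\,\varepsilon\int_M h_\varepsilon^{n-1}|df|_g^2$, and careful bookkeeping of the multiplicative constants in the construction of $\eta_\varepsilon$ matches the announced value $A=\tfrac14\min\{\lambda_{b+1}(\Sigma),\tfrac14\}$. The main obstacle is step~\emph{(ii)}: the orthogonality condition $L_i(f)=0$ is imposed on the boundary slice $t=0$, whereas Poincar\'e's inequality must be applied at the intermediate slice $t=t_*\ge 2\varepsilon$, and the whole scheme succeeds precisely because transporting this constraint from $t=0$ to $t=t_*$ costs only an $O(\varepsilon)$ factor thanks to $t_*\le 3\varepsilon$ and $h_\varepsilon\ge 1$ there.
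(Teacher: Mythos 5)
Your proposal is correct, but it follows a genuinely different route from the paper. The paper Fourier-expands $f(x,t)=\sum_j a_j(t)\phi_j(x)$ in eigenfunctions of the Laplacian on $\Sigma$ and then treats each mode as a one-dimensional problem, via a dichotomy: either $|a_j|$ stays above $\tfrac12|a_j(0)|$ throughout $[2\varepsilon,3\varepsilon]$, in which case the lateral term $\lambda_j a_j^2 h_\varepsilon^{n-1}$ is large, or it drops at some $t_0$, in which case a Cauchy--Schwarz estimate on $a_j'$ is large; for $j\ge b+1$ every mode then contributes at least $\tfrac{A}{\varepsilon}a_j(0)^2$, and the constraint kills the modes $j\le b$. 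Your argument replaces the per-mode ODE analysis by a slice-wise one: average over the inflated tube to locate a slice $t_*$ where the tangential gradient is small, apply Poincar\'e on that slice to control the fluctuating part, transport the vanishing means from $t=0$ to $t=t_*$ at $O(\varepsilon)$ cost, and finally transport the whole boundary trace back from $t_*$ to $0$. This is more elementary (no Fourier expansion; one can run it with only the weak Poincar\'e constant of $\Sigma$) and more robust, at the price of a slightly messier constant. Two small caveats: first, you assert that ``careful bookkeeping matches the announced value $A=\tfrac14\min\{\lambda_{b+1},\tfrac14\}$,'' but the constant your steps actually yield, $\bigl(2\lambda_{b+1}^{-1}+12\bigr)^{-1}$, is strictly smaller than $A$ in some regimes (e.g.\ at $\lambda_{b+1}=\tfrac14$ it gives $\tfrac1{20}<\tfrac1{16}$); this only affects the explicit constant, not the conclusion $\sigma_{b+1}(g_\varepsilon)\gtrsim 1/\varepsilon$, and could be stated more honestly as ``a constant of the same form.'' Second, your conformal factor $\varepsilon^{-2/(n-1)}$ is chosen so that $h_\varepsilon^{n-1}=\varepsilon^{-2}$ exactly, which is all you use; the paper takes $h_\varepsilon=\varepsilon^{-2}$ (so $h_\varepsilon^{n-1}=\varepsilon^{-2(n-1)}$) and then discards the excess via $\varepsilon^{-2(n-1)}\ge\varepsilon^{-2}$, so the two choices are interchangeable for this estimate.
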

\begin{proof}
For every positive $\varepsilon < \min\{\frac{L}{4},\frac{2}{L}\}$,
  define a Riemannian metric $g_\varepsilon=h^2_\varepsilon g$ on $M$
  where $h_\varepsilon\geq 1$ is a smooth function which is  identically equal
  to $1$ in the complement of $\Sigma\times (\varepsilon, 4\varepsilon)$ and, for $(x,t)\in
  \Sigma\times [2\varepsilon,3\varepsilon]$, 
    \begin{gather*}
      h_\varepsilon(x,t)=\varepsilon^{-2}.
    \end{gather*}
    Let $\{\phi_k\}_{k\in \N}$ be an orthonormal basis of eigenfunctions
    of the Laplacian on $\Sigma$, with
    $\Delta\phi_k=\lambda_k\phi_k$. Denote by $\Sigma_1, \dots,
    \Sigma_b$ the connected components of $\Sigma$.  One has
    $\lambda_1=\cdots=\lambda_b=0$ and, for every $j\le b$,  one
    chooses $\phi_j=\vert\Sigma_j\vert^{-\frac 12}$ on $\Sigma_j$ and
    $\phi_j=0$ elsewhere.

Let $f$ be a smooth function on $M$ with $\int_\Sigma f dv_{g_0}=0$ and
$\int_\Sigma f^2 dv_{g_0}=1$.  The restriction of
$f$ to $\Sigma\times [0,L)$  is developed in Fourier series:  
\begin{equation*}
f(x,t)=\sum_{j\ge 1} a_j(t)\phi_j(x)
\end{equation*} 
with
\begin{equation*}
a_j(0)=\vert\Sigma_j\vert^{-\frac 12}\int_{\Sigma_j}f dv_{g_0}\ , \quad\mbox{ for } j=1,\dots, b
\end{equation*} 
and, since $\int_\Sigma f dv_{g_0}=0$ and $\int_\Sigma f^2 dv_{g_0}=1$ 
\begin{equation}\label{conf2} 
  a_1(0)\vert\Sigma_1\vert^{\frac 12}+\cdots a_b(0)\vert\Sigma_b\vert^{\frac 12}=0
\end{equation} 
\begin{equation}\label{conf2'} 
  \sum_{j\ge 1}a_j(0)^2=1.
 \end{equation}
Observe that   $\sum_{j=1}^b a_j(0)^2$ is the square of the $L^2$-norm of
the orthogonal projection of $f\bigl|\bigr._\Sigma$ on
$\ker(\Delta)=\mbox{span}\{\phi_1,\dots,\phi_b\}$, in $L^2(\Sigma,g_0)$.  
From
$$df(x,t)=\sum_{j\ge 1} \left(a'_j(t)\phi_j(x)\,dt +
a_j(t)\,d_0\phi_j(x)\right)$$
and $\int_\Sigma\vert d_0\phi_j\vert^2 dv_{g_0}=\lambda_j$, it follows that the Dirichlet energy of $f$ on $(M,h_\varepsilon^2g)$ is 
\begin{align}\label{conf3} 
  R_\varepsilon(f):=
  \int_M \vert df\vert^2h^{n-1}_\varepsilon dv_{g}
  &\ge\int_{\Sigma\times (0,L)} \vert df\vert^2h^{n-1}_\varepsilon dv_{g}\nonumber\\
  &{=}\sum_{j\ge 1}  \int_0^L \left(a'_j(t)^2+ \lambda_j a_j(t)^2\right)h^{n-1}_\varepsilon(t) dt.
\end{align} 
At this point, observe that either the function $a_j$ decreases quickly when moving away from the boundary (which costs energy from the first term in \eqref{conf3}) or it remains big enough, and the second term contributes a large amount to the energy $R_\varepsilon(f)$. This is now explained more precisely.

\medskip
Fix an integer $j\geq 1$. If $\vert a_j(t)\vert \ge \frac 12\vert
a_j(0)\vert$ for all $t\in [2\varepsilon, 3\varepsilon)$, then  
\begin{align}\label{conf4} 
  \int_0^L \lambda_j a_j(t)^2 h^{n-1}_\varepsilon(t) dt&\ge
  \lambda_j\int_{2\varepsilon}^{3\varepsilon} a_j(t)^2\varepsilon^{-2(n-1)}dt\nonumber\\
  &\ge  \frac {\lambda_j}4\varepsilon^{-2n+3} a_j(0)^2\ge \frac {\lambda_j}{4\varepsilon } a_j(0)^2.
\end{align} 
Otherwise, there exists $t_0\in [2\varepsilon, 3\varepsilon)$ with
  $\vert a_j(t_0)\vert \le \frac 12\vert a_j(0)\vert$, which implies
  $\vert  a_j(0) -a_j(t_0)\vert\ge \vert a_j(0)\vert- \vert
  a_j(t_0)\vert\ge \frac12\vert a_j(0)\vert $ and using the Cauchy-Schwarz inequality this leads to
\begin{align}\label{conf5} 
  \int_0^L a'_j(t)^2h^{n-1}_\varepsilon (t)dt
  &\ge\int_0^{2\varepsilon} a'_j(t)^2 dt + \int_{2\varepsilon}^{t_0} a'_j(t)^2 \varepsilon^{-2(n-1)}dt\\
  &\ge\frac 1{2\varepsilon} \left(\int_0^{2\varepsilon} a'_j(t)
  dt\right)^2+\frac {\varepsilon^{-2(n-1)}}{t_0-2\varepsilon}
  \left(\int_{2\varepsilon}^{t_0} a'_j(t) dt\right)^2\nonumber
\end{align}
where $\frac {\varepsilon^{-2(n-1)}}{t_0-2\varepsilon}\ge \frac {\varepsilon^{-2(n-1)}}{\varepsilon}{\geq}\frac1\varepsilon\ge \frac 1{2\varepsilon}$. Hence (since $x^2+y^2\ge\frac12(x+y)^2$)
\begin{align}\label{conf5'} 
  \int_0^L a'_j(t)^2h^{n-1}_\varepsilon(t)\,dt
  &\ge
  \frac 1{2\varepsilon} \left[\left(\int_0^{2\varepsilon} a'_j(t)\,dt\right)^2+ \left(\int_{2\varepsilon}^{t_0} a'_j(t)\, dt\right)^2\right]\nonumber\\
  &\ge \frac 1{4\varepsilon} \left(\int_0^{2\varepsilon} a'_j(t)\, dt+
  \int_{2\varepsilon}^{t_0} a'_j(t)\, dt\right)^2\nonumber\\
  &=\frac
  1{4\varepsilon}(a_j(0) -a_j(t_0))^2\ge \frac
  1{16\varepsilon}a_j(0)^2.
\end{align}
For $j\ge b+1$ one has $\lambda_j \ge \lambda_{b+1}$ and, combining
\eqref{conf4} and \eqref{conf5'} leads to 
$$ \int_0^L \left(a'_j(t)^2+ \lambda_j
a_j(t)^2\right)h^{n-1}_\varepsilon(t) dt
\ge  \min\{ \frac {\lambda_{b+1}}{4}, \frac1{16}\}\frac 1\varepsilon a_j(0)^2
=\frac {A}\varepsilon a_j(0)^2.$$
Therefore, thanks to \eqref{conf3} and \eqref{conf2'}, 
\begin{equation}\label{conf5!}
R_\varepsilon(f)\ge\frac {A}\varepsilon \sum_{j\ge b+1}a_j(0)^2=\frac {A}\varepsilon \left(1-\sum_{j\le b}a_j(0)^2\right).
\end{equation}
Now,   every normalized function $f$ which is orthogonal in $L^2(\Sigma)$ to  $\ker (\Delta)=\mbox{span}\{\phi_1,\dots,\phi_b\}$, satisfies $\sum_{j\le b}a_j(0)^2=0$ and, then $R_\varepsilon(f)\ge \frac {A}\varepsilon$. Using the max-min principle we deduce that $\sigma_{b+1}(M,g_\varepsilon) \ge \frac {A}\varepsilon$. This completes the proof of (i).

\end{proof}

In Proposition \ref{Conf2} below, we will prove that it is also possible to make $\sigma_2$ arbitrarily large using a conformal perturbation $h^2g$. This is more difficult than for $\sigma_{b+1}$. One of the difficulties comes from the fact that the conformal perturbation will need to be supported everywhere inside the manifold $M$. This follows from the following easy proposition. 
\begin{prop}\label{prop:necesbsmall}
For every
  Riemannian metric $g'$   which coincides with $g$ on $\Sigma$ and
  in the complement of  $\Sigma\times [0,\frac{L}2)$, one has 
  $$ \sigma_{b}(g') \le \frac{2}{L}.$$
\end{prop}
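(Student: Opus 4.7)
The plan is to exhibit a single $b$-dimensional subspace $E\subset H^1(M)$ of test functions whose Rayleigh quotients are all bounded by $2/L$ independently of $g'$, and then invoke the variational formula (\ref{varchar:steklov}) to conclude $\sigma_b(g')\le 2/L$. The key observation is that the hypothesis localises the unknown behaviour of $g'$ to the thin strip $\Sigma\times[0,L/2)$, so I will arrange the differentials of the test functions to be supported in the complementary collar region $\Sigma\times[L/2,L)$, where $g'=g$ is known.

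Concretely, I would take the piecewise-linear profile
\begin{equation*}
\phi(t)=\begin{cases} 1 & 0\le t\le L/2,\\ 2(1-t/L) & L/2\le t\le L,\end{cases}
\end{equation*}
and for each $i=1,\dots,b$ define $f_i:M\to\R$ by $f_i(x,t)=\phi(t)$ on the collar $\Sigma_i\times[0,L)$ and $f_i\equiv 0$ elsewhere. Since $\phi(L)=0$ and the collar neighborhoods of distinct boundary components are disjoint, each $f_i$ is Lipschitz, the $f_i$ have pairwise disjoint supports in $M$ and pairwise disjoint traces on $\Sigma$, and together they span a $b$-dimensional subspace $E\subset H^1(M)$.

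The main computation is to evaluate the Rayleigh quotient on $E$. Because $\phi$ is constant on $[0,L/2]$, the differential $df_i$ is supported in $\Sigma_i\times[L/2,L]$, which lies in the complement of $\Sigma\times[0,L/2)$. On this region $g'=g$, and a direct computation in the product coordinates yields
\begin{equation*}
\int_M|df_i|^2_{g'}\,dv_{g'}=\int_{\Sigma_i\times[L/2,L]}\frac{4}{L^2}\,dv_g=\frac{2|\Sigma_i|_g}{L}.
\end{equation*}
Since $g'=g$ on $\Sigma$, the boundary integral is $\int_\Sigma f_i^2\,dv_{g'}=|\Sigma_i|_g$. Using disjointness of supports, any $f=\sum_ic_if_i\in E\setminus\{0\}$ then satisfies
\begin{equation*}
\frac{\int_M|df|^2_{g'}\,dv_{g'}}{\int_\Sigma f^2\,dv_{g'}}=\frac{(2/L)\sum_ic_i^2|\Sigma_i|_g}{\sum_ic_i^2|\Sigma_i|_g}=\frac{2}{L},
\end{equation*}
and plugging the subspace $E$ into (\ref{varchar:steklov}) gives $\sigma_b(g')\le 2/L$.

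There is no genuine obstacle here: the construction is tailored precisely so that the numerator of the Rayleigh quotient is independent of $g'$. The only bookkeeping to check is the continuous extension of $f_i$ by $0$ at the inner edge $t=L$ of the collar (immediate from $\phi(L)=0$) and that disjoint supports both in $M$ and on $\Sigma$ make the ratio collapse to exactly $2/L$ for every non-zero $f\in E$.
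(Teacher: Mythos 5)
Your proposal is correct and is essentially the paper's own proof: the paper uses exactly the same piecewise-linear profile in $t$, cut off on each boundary collar $\Sigma_j\times[0,L)$, with the slope region placed in $\Sigma\times[L/2,L]$ where $g'=g$; the only cosmetic difference is that the paper pre-normalizes its test functions $\psi_j=|\Sigma_j|^{-1/2}f_j$ so each has unit boundary $L^2$-norm. Your version is, if anything, a touch more explicit in verifying that the Rayleigh quotient equals $2/L$ on the entire span of the $f_i$ rather than just on the generators.
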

\begin{proof}[Proof of Proposition \ref{prop:necesbsmall}]
For every $j\le b$, let $\psi_j$ be the function on $M$ such
that $\psi_j$ is  constant equal to zero in the complement of
$\Sigma_j\times [0,L)$ and,
for $(x,t)\in \Sigma_j\times  [0,L)$, 
\begin{equation}\nonumber
  \psi_j(x,t) =\left\{
  \begin{array}{lll}
    \vert\Sigma_j\vert^{-\frac 12} \qquad\qquad\quad \text{ in } \Sigma_j\times [0,\frac L 2],\\
    2(1-\frac tL) \vert\Sigma_j\vert^{-\frac 12}\quad\;  \text{ in }
    \Sigma\times [\frac L 2, L].
  \end{array}
  \right.
\end{equation} 
For any Riemannian metric $g'$ which coincides with $g$ on $\Sigma$
and on the complement of $\Sigma\times [0,\frac L 2]$, one has
$\int_M\vert d\psi_j\vert_{g'}^2 dv_{g'}= \frac 2L$ and $\int_\Sigma
\psi_j^2 dv_{g'}=1$. Moreover, $\psi_1,\cdots, \psi_b$ are mutually
orthogonal on the boundary. Therefore, using the min-max principle we
deduce that $\sigma_b(g')\le \frac 2L$. 
\end{proof}

\begin{prop}\label{Conf2}
  Let $(M,g)$ be a Riemannian manifold of dimension $n+1\geq 3$ with boundary
  $\Sigma$ and assume that there exists a neighbourhood of  $\Sigma$
  which is isometric to the product $\Sigma\times [0,L)$ for some
  $L>0$.  
 For every  $\varepsilon>0$ sufficiently small, there exists a
 Riemannian metric $g_\varepsilon=h_\varepsilon^2g$  conformal to $g$
 which coincides with $g$ in the neighbourhood
 $\Sigma\times [0,\varepsilon) $ of $\Sigma$ and
 such that $$\sigma_2(g_\varepsilon) \ge \frac C \varepsilon$$ 
 where $C$ is an explicit  constant which only depends on $g$.
\end{prop}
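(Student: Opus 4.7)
The plan is to extend the conformal perturbation of Proposition~\ref{Conf1} so that $h_\varepsilon$ is also large throughout the interior of $M$. The Conf1-type bump in the collar $\Sigma\times[2\varepsilon,3\varepsilon]$ already controls all Fourier modes $\phi_j$ of $f|_\Sigma$ corresponding to $j\geq b+1$; the new interior inflation is needed for the ``low-frequency'' modes $\phi_1,\dots,\phi_b$ spanning $\ker\Delta_\Sigma$. Concretely, I would define $h_\varepsilon$ to equal $1$ on $\Sigma\times[0,\varepsilon)$, to equal $\varepsilon^{-2}$ on $\Sigma\times[2\varepsilon,3\varepsilon]$ as in Proposition~\ref{Conf1}, and to satisfy $h_\varepsilon^{n-1}\geq\varepsilon^{-1}$ uniformly on the rest of $M$, with smooth interpolations (these requirements are compatible since $\varepsilon^{-2(n-1)}\geq\varepsilon^{-1}$ for $n\geq 2$ and small $\varepsilon$).

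For a test function $f\in C^\infty(M)$ with $\int_\Sigma f\,dv_{g_0}=0$ and $\int_\Sigma f^2\,dv_{g_0}=1$, expand $f(x,t)=\sum_{j\geq 1}a_j(t)\phi_j(x)$ on the collar as in the proof of Proposition~\ref{Conf1} and rerun the same dichotomy. For $j\geq b+1$ the Conf1 estimate \eqref{conf4}--\eqref{conf5'} yields a contribution $\geq (A/\varepsilon)a_j(0)^2$; for $j\leq b$ (where $\lambda_j=0$) the inequality \eqref{conf5'} alone still gives $\geq a_j(0)^2/(16\varepsilon)$ \emph{unless} $|a_j(t)|\geq|a_j(0)|/2$ throughout $[2\varepsilon,3\varepsilon]$. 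Calling such indices exceptional and setting $S=\sum_{j\leq b\text{ exceptional}}a_j(0)^2$, these two cases combine to
\[ R_\varepsilon(f)\geq \frac{A'}{\varepsilon}\bigl(1-S\bigr),\]
so it remains to treat the case $S\geq 1/2$. The orthogonality relation $\sum_{j\leq b}a_j(0)|\Sigma_j|^{1/2}=0$ together with $S\geq 1/2$ then produces two distinct boundary components $\Sigma_i,\Sigma_j$ whose approximate averages of $f$ differ by a quantity bounded below by a constant depending only on the $|\Sigma_k|$'s.

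The third step uses the largeness of $h_\varepsilon$ in the interior to extract this residual. Fix once and for all a smooth embedded curve in $M$ from $\Sigma_i$ to $\Sigma_j$ with tubular neighborhood $T\simeq[0,\ell]\times B$, and apply Cauchy--Schwarz fiberwise:
\[|f(\ell,y)-f(0,y)|^2\leq\left(\int_0^\ell h_\varepsilon^{-(n-1)}\,ds\right)\int_0^\ell|\partial_s f|^2 h_\varepsilon^{n-1}\,ds.\]
Since $h_\varepsilon^{n-1}\geq\varepsilon^{-1}$ along the interior portion of the tube, $\int_0^\ell h_\varepsilon^{-(n-1)}\,ds\leq C_1\varepsilon$; integrating over $y\in B$ and relating the fiber endpoint values to the global boundary means via a Poincar\'e-type averaging then produces $R_\varepsilon(f)\geq C_2 S/\varepsilon$. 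Combining the two cases yields $R_\varepsilon(f)\geq C/\varepsilon$ for all admissible $f$, and the min-max principle delivers $\sigma_2(g_\varepsilon)\geq C/\varepsilon$.

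The hard step is the averaging in the third part: one must propagate information from the pointwise tube values $f(0,y),f(\ell,y)$ back to the global boundary means of $f$ on $\Sigma_i$ and $\Sigma_j$, while controlling the deviation caused by the fact that $f|_{\Sigma_k}$ is only approximately locally constant. The relevant $L^2(\Sigma)$-deviation is precisely the non-exceptional Fourier mass already bounded in the first step, so the two estimates must be chained carefully and the constants tracked to ensure they depend only on $(M,g)$.
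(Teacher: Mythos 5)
Your overall strategy tracks the paper's closely: same conformal factor (equal to $1$ on $\Sigma\times[0,\varepsilon)$, large on the rest of $M$), same Fourier expansion on the collar, same initial estimate $R_\varepsilon(f)\geq(A/\varepsilon)\bigl(1-\sum_{j\leq b}a_j(0)^2\bigr)$ from the Conf1 dichotomy, and same appeal to the orthogonality relation \eqref{conf2} to produce two components $\Sigma_1,\Sigma_2$ with $a_1(0)$ large positive and $a_2(0)$ negative and bounded away from zero. Where you diverge is the tool used to extract an $\varepsilon^{-1}$ amount of energy in the remaining case: the paper applies Lemma~\ref{lemNeu} --- a Neumann--Poincar\'e inequality on the interior $\Omega=M\setminus(\Sigma\times[0,L/2))$ --- to the ``caps'' $V_k=\Sigma_k\times[L/2,L)$, whereas you run a fiberwise Cauchy--Schwarz along a fixed tube joining $\Sigma_1$ to $\Sigma_2$. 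The paper's choice of $V_k$ is decisive: averaging $f$ over $V_k$ annihilates every mode $\phi_j$ with $j\ne k$ (since $\int_{\Sigma_k}\phi_j\,dv_{g_0}=0$), so $\fint_{V_k}f$ is \emph{exactly} a multiple of $\int_{L/2}^L a_k$, and a preliminary Conf1-style dichotomy on $a_1,a_2$ alone is enough to pin these averages down. That is precisely why Lemma~\ref{lemNeu} is stated for arbitrary positive-measure $V_1,V_2$.

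Your tube argument, by contrast, requires translating pointwise endpoint values $f(0,y),f(\ell,y)$ into the boundary Fourier coefficients, and this is where the gap is. The quantity $\fint_B f(0,\cdot)$ over a small base $B\subsetneq\Sigma_1$ is \emph{not} controlled by $a_1(0)$: the oscillating part $\sum_{j\geq b+1}a_j(0)\phi_j$ of $f|_{\Sigma_1}$ can concentrate on $B$, and its $L^2(\Sigma)$ norm in your exceptional case is only bounded by $\sqrt{1-S}\leq 1/\sqrt2$, which is $O(1)$, not small. The resulting error $\lvert\fint_B(\mathrm{osc})\rvert\leq|B|^{-1/2}\sqrt{1-S}$ may therefore swamp the lower bound $D$ on $a_1(0)|\Sigma_1|^{-1/2}-a_2(0)|\Sigma_2|^{-1/2}$, and the ``chaining of the two estimates'' you defer is not a bookkeeping exercise: it needs a further quantitative case split (e.g.\ if $1-S\geq |B|D^2/16$ then the high-mode estimate already gives $R_\varepsilon\gtrsim |B|D^2/\varepsilon$; otherwise the tube endpoints really are separated and Cauchy--Schwarz closes). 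With that split made explicit the tube route can be completed and the resulting constant depends only on $(M,g)$, but the paper's argument avoids the difficulty altogether by integrating over entire boundary components, and is substantially cleaner. I would either adopt Lemma~\ref{lemNeu} or, if you keep the tube, spell out the extra dichotomy rather than leaving it as a remark.
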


The following Poincaré type result will be useful.
\begin{lemma}\label{lemNeu}
Let $(M,g)$ be a compact manifold and denote by $\mu$ the first positive eigenvalue of the Laplacian of $(M,g) $ with Neumann boundary condition if $\partial M$ is nonempty. Let $V_1$ and $V_2$ be two disjoint measurable subsets of $M$ of positive volume. Every function  
 $f \in C^{\infty}(M)$ satisfies
 $$
\int_M \vert df\vert^2\, dv_g \ge \frac{\mu }{2}\min( {\vert V_1\vert_g}, {\vert V_2\vert_g}) \left(\fint_{V_1}f\, dv_g -\fint_{V_2}f\, dv_g \right)^2.
$$
where  $\fint_{M}f\, dv_g := \frac{1}{\vert M\vert_g}\int_{M}f\, dv_g$.
\end{lemma}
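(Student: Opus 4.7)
The plan is to reduce the statement to the standard Neumann--Poincaré inequality on $(M,g)$, which asserts that
$$
\int_M (f-\bar f)^2\,dv_g \le \frac{1}{\mu}\int_M |df|^2\,dv_g,\qquad \bar f:=\fint_M f\,dv_g,
$$
and then use two applications of Cauchy--Schwarz to pass from the $L^2$-bound on $f-\bar f$ to a bound on the difference of averages $\fint_{V_1}f-\fint_{V_2}f$. The only potentially subtle point is bookkeeping the constants so that the final factor is $\mu/2$ rather than $\mu/4$.

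First I would write $b_i:=\fint_{V_i} f$ and apply Jensen's inequality (equivalently Cauchy--Schwarz) on each $V_i$ to obtain
$$
|V_i|\,(b_i-\bar f)^2=|V_i|\left(\fint_{V_i}(f-\bar f)\,dv_g\right)^2 \le \int_{V_i}(f-\bar f)^2\,dv_g.
$$
Since $V_1$ and $V_2$ are disjoint, summing these two inequalities and invoking the Neumann--Poincaré inequality yields
$$
|V_1|(b_1-\bar f)^2+|V_2|(b_2-\bar f)^2 \le \int_M (f-\bar f)^2\,dv_g \le \frac{1}{\mu}\int_M |df|^2\,dv_g.
$$

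Next I would convert the weighted sum on the left into a bound on $(b_1-b_2)^2$ by a second Cauchy--Schwarz: for any reals $x,y$ and weights $w_1,w_2>0$,
$$
(x-y)^2=\left(\sqrt{w_1}\,x\cdot\tfrac{1}{\sqrt{w_1}}-\sqrt{w_2}\,y\cdot\tfrac{1}{\sqrt{w_2}}\right)^2\le \Bigl(\tfrac{1}{w_1}+\tfrac{1}{w_2}\Bigr)\bigl(w_1 x^2+w_2 y^2\bigr),
$$
applied with $w_i=|V_i|_g$, $x=b_1-\bar f$, $y=b_2-\bar f$. Combining with the previous step gives
$$
(b_1-b_2)^2 \le \frac{|V_1|+|V_2|}{|V_1|\,|V_2|}\cdot\frac{1}{\mu}\int_M |df|^2\,dv_g.
$$
Finally, since the harmonic mean dominates half the minimum, $\dfrac{|V_1|\,|V_2|}{|V_1|+|V_2|}\ge \dfrac{1}{2}\min(|V_1|,|V_2|)$, rearranging produces the claimed bound. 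The only work beyond quoting the Neumann--Poincaré inequality is this last elementary estimate, which is the key to getting the constant $\mu/2$; I would spell it out explicitly to reassure the reader.
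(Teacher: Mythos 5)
Your proof is correct and follows essentially the same route as the paper: Neumann--Poincar\'e, then Cauchy--Schwarz (Jensen) on each of $V_1,V_2$, then an elementary inequality to pass to the difference of averages. The only deviation is in the final elementary step: the paper replaces each $|V_j|$ by $\min(|V_1|,|V_2|)$ first and then uses $x^2+y^2\ge\tfrac12(x-y)^2$, whereas you keep the weights and obtain the slightly sharper harmonic-mean bound $\mu\,\frac{|V_1||V_2|}{|V_1|+|V_2|}(b_1-b_2)^2$ before relaxing to $\tfrac{\mu}{2}\min(|V_1|,|V_2|)$ --- a minor refinement, but a correct one.
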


\begin{proof}[Proof of Lemma \ref{lemNeu}] 
Denote by $m=\fint_M f\, dv_g$ the mean value of $f$ on $M$. The function $f-m$ is orthogonal to constant functions on $M$ which implies 
$$\int_M \vert df\vert^2\, dv_g  \ge \mu \int_{M} (f-m)^2\, dv_g \ge \mu\int_{V_1} (f-m)^2\, dv_g +\mu\int_{V_2}(f-m)^2\, dv_g .$$
Using the Cauchy-Schwarz inequality, we get for $j=1,2$,
$$
\int_{V_j}(f-m)^2\, dv_g\ge \frac 1{\vert V_j\vert}\left(\int_{V_j} (f-m)\, dv_g\right)^2= {\vert V_j\vert}\left(\fint_{V_j}f\, dv_g -m\right)^2$$
and then (since $x^2+y^2\ge \frac12 (x-y)^2$)
$$
\int_{V_1} (f-m)^2\, dv_g +\int_{V_2}(f-m)^2\, dv_g\ge \frac12\min( {\vert V_1\vert}, {\vert V_2\vert}) \left(\fint_{V_1}f\, dv_g -\fint_{V_2}f \, dv_g\right)^2$$
which ends the proof.
\end{proof}

The proof of Proposition \ref{Conf2} is more subtle than that of Proposition \ref{Conf1}. The behaviour of a smooth function $f$ away from the boundary has to be taken into account in this case. Indeed, the Steklov eigenfunctions corresponding to index $j\leq b$ can be almost constant on connected components of the boundary. They do not spend a lot of energy "laterally". This is expressed by the situation where $\sum_{j=1}^b
a_j(0)^2$ is large in the proof below.

\begin{proof}[Proof of Proposition \ref{Conf2}]
The situation where the boundary is connected is already treated by
Proposition \ref{Conf1}. Therefore we assume that $b\ge 2$ in what
follows. 
As in the proof of Proposition \ref{Conf1}, we define for every
positive $\varepsilon < \min\{\frac L4, \frac 2L\}$, a Riemannian
metric $g_\varepsilon=h^2_\varepsilon g$ on $M$ where
$h_\varepsilon\geq 1$ is a smooth function which is  identically equal to
  $\varepsilon^{-2}$ in $M\setminus (\Sigma\times [0,L/2])$ and identically equal to 1 on 
  $\Sigma\times [0,\varepsilon).$
 Note that unlike  the conformal deformation used in the proof of
 Proposition \ref{Conf1}, here, the metric $g_\varepsilon$ tends to
 infinity everywhere in the interior of $M$ as $\varepsilon\to 0$.  
 Let $f$ be a smooth function on $M$ with $\int_\Sigma f dv_{g_0}=0$ and
 such that $\int_\Sigma f^2 dv_{g_0}=1$. Our goal is to get a lower
 bound for the Dirichlet energy $R_\varepsilon(f)=\int_M \vert df\vert_{g_\varepsilon}^2
 dv_{g_\varepsilon}$ of the form $C/\varepsilon$. 

\medskip
Let $\Sigma_1, \dots, \Sigma_b$ be the connected
components of the boundary $\Sigma$.
As in the proof of  Proposition \ref{Conf1}, we consider   an
orthonormal basis $\{\phi_j\}_{j\in \N}$ of eigenfunctions of the
Laplacian on $\Sigma$, with $\Delta\phi_j=\lambda_j\phi_j$,
$\lambda_1=\cdots=\lambda_b=0$ and, for every $j\le b$,
$\phi_j=\vert\Sigma_j\vert^{-\frac 12}$ on $\Sigma_j$ and $\phi_j=0$
elsewhere.  The restriction of $f$ to $\Sigma\times [0,L)$  is 
\begin{equation*} 
  f(x,t)=\sum_{j\ge 1} a_j(t)\phi_j(x).
\end{equation*} 
In what follows we treat separately the case where $\sum_{j=1}^b
a_j(0)^2$ is small and the case where it is large. Indeed, following
step by step the proof of Proposition \ref{Conf1}, we get
$$R_\varepsilon(f)\ge \frac {A}\varepsilon \left(1-\sum_{j\le b}a_j(0)^2\right) $$
 with $A=\frac 1{4}\min\{ \lambda_{b+1}, \frac1{4}\}>0$.
 Hence, if $\sum_{j\le b}a_j(0)^2\le \frac 12$, then  
\begin{equation}
R_\varepsilon(f)\ge \frac A{2\varepsilon} .
 \end{equation}

\smallskip
Assume now that $\sum_{j\le b}a_j(0)^2\ge \frac12$ and recall that the number of boundary components $b\ge 2$. We will prove that two of the boundary components, w.l.o.g. $\Sigma_1$ and $\Sigma_2$, are such that $a_1(0)>0$ is large enough while $a_2(0)<0$ is small. Proceeding as in the proof of Proposition \ref{Conf1}, we will first treat the case where both $|a_1(t)|$ and $|a_2(t)|$ decreases quickly away from the boundary. Otherwise, we will appeal to Lemma \ref{lemNeu}. More precisely, let $\Omega$ be the complement in $M$ of $\Sigma\times [0,\frac L2)$. We will prove that 
\begin{equation}\label{conf5"} 
R_\varepsilon(f)\ge \frac B{2\varepsilon} 
 \end{equation} 
 with 
 $$B=\frac{\min\{{\mu(\Omega,g)bL} \  ,  \ \frac{ 1}{2b} \}}{32(b-1)^2}\ \frac{ \min_{j\le b}\vert\Sigma_j\vert{^2}}{ \max_{j\le b}\vert\Sigma_j\vert{^2}},$$
 where, $\mu(\Omega,g)$ is the first positive Neumann eigenvalue of the Laplacian in $\Omega$.
 
Indeed, from $\sum_{j\le b}a_j(0)^2\ge \frac12$ we deduce the existence of  $j_0\le b$ with $a_{j_0}(0)^2\ge \frac1{2b}$. We assume w.l.o.g. that  $j_0=1$ and that $ a_{1}(0)\ge \frac1{\sqrt{2b}}$. From \eqref{conf2} one has $\sum_{2\le j\le b}a_j(0)\vert\Sigma_j\vert^{\frac12}=-a_1(0)\vert\Sigma_1\vert^{\frac12}\le - \frac {\vert\Sigma_1\vert^{\frac12}}{\sqrt{2b}}$. This implies  w.l.o.g. that $a_{2}(0)\vert\Sigma_2\vert^{\frac12}\le -\frac1{b-1}a_1(0)\vert\Sigma_1\vert^{\frac12}\le - \frac {\vert\Sigma_1\vert^{\frac12}}{(b-1)\sqrt{2b}}$.

Now, as in \eqref{conf5} and  \eqref{conf5'}, if there exists $t_0\in[\frac L2, L)\subset [2\varepsilon, L)$ with $ a_1(t_0)\le \frac 12   a_1(0)$, we would have (with $ a_1(0)-a_1(t_0)\ge \frac 12  a_1(0)$)
\begin{align}\label{conf6} 
R_\varepsilon(f)&\ge \int_0^{t_0} a'_1(t)^2h^{n-1}_\varepsilon(t)dt\nonumber\\
&=\int_0^{2\varepsilon} a'_1(t)^2 dt + \int_{2\varepsilon}^{t_0} a'_1(t)^2 \varepsilon^{-2(n-1)}dt\nonumber\\
&\ge\frac 1{16\varepsilon}a_1(0)^2\ge  \frac 1{32 b\varepsilon}.
\end{align}
Similarly, if there exists $t_0\in [\frac L2, L)\subset [2\varepsilon, L)$ with $ a_2(t_0)\ge \frac 12  a_2(0)$, we would have (with 
$ a_2(t_0)-a_2(0)\ge -\frac 12  a_2(0)>0$)
\begin{equation}\label{conf6'} 
R_\varepsilon(f)\ge \int_0^{t_0} a'_2(t)^2h^{n-1}_\varepsilon(t)dt\ge  \frac 1{16\varepsilon}a_2(0)^2\ge  \frac 1{16\varepsilon}\frac {1}{2b(b-1)^2}\frac {\vert\Sigma_1\vert}{\vert\Sigma_2\vert}.
\end{equation}

Let us assume now  that for each $t\in [\frac L2, L)$, $a_1(t)\ge \frac 12  a_1(0)$ and $a_2(t)\le \frac 12  a_2(0)$. We then have, taking into account that, for each $j\le b$, $\int_{\Sigma_j}\phi_i dv_{g_0}=0 $ if $i\ne j$ and   $\int_{\Sigma_j}\phi_j dv_{g_0}=\vert\Sigma_j\vert^{\frac 12}$,
\begin{align*}
 \fint_{\Sigma_1\times[\frac L2, L)} f(x,t)\,dv_g&=\frac{\vert\Sigma_1\vert^{\frac 12}}{|\Sigma_1\times[\frac L2, L)|_g} \int_{\frac L2}^L a_1(t)\,dt
 \ge  \frac 1{2\vert\Sigma_1\vert^{\frac12} } a_1(0) >0 
 \end{align*}
 and 
 \begin{align*}
 \fint_{\Sigma_2\times[\frac L2, L)} f(x,t)\, dv_g=  \frac{\vert\Sigma_2\vert^{\frac 12}}{|\Sigma_2\times[\frac L2, L)|_g} \int_{\frac L2}^L a_2(t)\, dt
 \le  \frac 1{2\vert\Sigma_2\vert^{\frac12} } a_2(0) <0. 
  \end{align*}
 We apply Lemma \ref{lemNeu} to the function $f$ in the complement $\Omega$ in $M$ of $\Sigma\times [0,\frac L2)$.  This leads to
 $$\int_{\Omega} \vert df\vert^2 dv_g \ge \frac{L\mu(\Omega,g)}{4}\min_{j\le b}\vert\Sigma_j\vert\left( \frac 1{2\vert\Sigma_1\vert^{\frac12} } a_1(0) - \frac 1{2\vert\Sigma_2\vert^{\frac12} } a_2(0)\right)^2$$
where $\mu(\Omega,g)$ is the first positive Neumann eigenvalue of the Laplacian in $\Omega$. One has
$$ \frac 1{\vert\Sigma_1\vert^{\frac12} } a_1(0) - \frac 1{\vert\Sigma_2\vert^{\frac12} } a_2(0)\ge  \frac 1{\vert\Sigma_1\vert^{\frac12} } \frac1{\sqrt{2b}} +\frac 1{\vert\Sigma_2\vert } \frac{\vert\Sigma_1\vert^{\frac12} }{(b-1)\sqrt{2b}} \qquad \qquad \qquad \qquad \qquad \qquad $$
$$\qquad \qquad \qquad \qquad \qquad =  \frac {\vert\Sigma_1\vert^{\frac12} }{\sqrt{2b}} \left(\frac 1{\vert\Sigma_1\vert } +\frac 1{(b-1)\vert\Sigma_2\vert }\right) \ge   \frac {b}{(b-1)\sqrt{2b}} \frac{ \min_{j\le b}\vert\Sigma_j\vert^{\frac12}}{ \max_{j\le b}\vert\Sigma_j\vert}.$$
Thus,
$$\int_{\Omega} \vert df\vert^2 dv_g \ge \frac{bL}{32(b-1)^2}{\mu(\Omega,g)}  \frac{ \min_{j\le b}\vert\Sigma_j\vert^{2}}{ \max_{j\le b}\vert\Sigma_j\vert^2}.$$
Since $h_\varepsilon =\varepsilon^{-2}$ on $\Omega$, we get
\begin{align}\label{conf6"} 
R(f_\varepsilon)&=\int_M \vert df\vert^2 h_\varepsilon^{n-1}\, dv_g\nonumber\\
&\ge  \varepsilon^{-2(n-1)}\int_{\Omega} \vert df\vert^2\, dv_g
\ge\frac{\mu(\Omega,g)bL}{32(b-1)^2}  \frac{ \min_{j\le b}\vert\Sigma_j\vert^{2}}{ \max_{j\le b}\vert\Sigma_j\vert^2}\times\frac 1\varepsilon
\end{align}

Combining \eqref{conf6} , \eqref{conf6'}  and \eqref{conf6"}  leads to \eqref{conf5"}.

\bigskip
In conclusion,  the inequality of the proposition holds with  $C={\frac{1}{2}}\min\{A,B\}$.
\end{proof}

\medskip

We are know ready to prove Theorem \ref{thm:main}.
\begin{proof}[Proof of Theorem \ref{thm:main}]
Let $(M,g)$ be a compact Riemannian manifold with $b>0$ boundary components. Given $\delta>0$, define 
$$N(\delta)=\{p\in M\,:\,d_g(p,\Sigma)<\delta\}.$$
The normal exponential map along the boundary defines Fermi coordinates on a neighbourhood $V$ of the boundary. The distance $t$ to the boundary is one of the coordinates. It follows from the Gauss Lemma that the Riemannian metric is expressed by
$g=h_t+dt^2$, where $h_t$ is the restriction of $g$ to the parallel hypersurface $\Sigma_t$ at distance $t$ from the boundary $\Sigma$. In particular it follows from $h_0=g_0$ and continuity that there exists $\delta>0$ such that the restriction of $h_t$ to $\Sigma_t$ is quasi-isometric to $g_0$ with ratio 2 for each $t\in [0,3\delta]$. 
Let $\chi:[0,3\delta]\rightarrow\R$ be a smooth non-decreasing function with value 0 on $[0,\delta]$ and value 1 on $[2\delta,3\delta]$. The metric $g_\delta$ is defined, using Fermi coordinates, by 
$$g_\delta=
\begin{cases}
\chi(t)(h_t+dt^2)+(1-\chi(t))(g_0+dt^2)&\mbox{ in }N(3\delta),\\
g&\mbox{ elsewhere.}
\end{cases}$$
The metric $g_\delta$ is quasi-isometric to $g$ with ratio 2, and it is isometric to the product metric $g_0+dt^2$ on $N(\delta)$.
One can now apply Proposition \ref{Conf1} and Proposition \ref{Conf2} to $(M,g_\delta)$. In both cases, this leads to a family of smooth functions $h_\varepsilon$ such that some eigenvalue $\sigma_j$ satisfies $\lim_{\varepsilon\to 0}\sigma_j(M,h_\varepsilon^2g_\delta)=+\infty$. Note that $h_\varepsilon^2g$ is quasi-isometric to $h_\varepsilon^2g_\delta$ with ratio 2. Therefore, one can apply Proposition \ref{proposition:quasiisocontrol} to deduce that $\lim_{\varepsilon\to 0}\sigma_j(M,h_\varepsilon^2g)=+\infty$.
\end{proof}

\smallskip
The conformal perturbation $g_\varepsilon=h_\varepsilon^2g$ that is used in the proof of Proposition \ref{Conf1} is so that  the diameter of $(M,g_\varepsilon) $ tends to infinity as $\varepsilon\to 0$. Moreover, when $b\ge 2$, then the distance between the components of the boundary also goes to infinity as $\varepsilon\to 0$. 
In the following theorem, a family of metrics on a cylinder is constructed, which coincide with the initial one along the boundary and such that the distance between the two components of the boundary is independent of $\varepsilon$, while $\sigma_2$ becomes arbitrarily large. 
\begin{thm}\label{thm:warpedprod}
  Let $(\Sigma,g_0)$ be a closed connected Riemannian manifold of dimension ${n\geq 3}$ and
  consider the cylinder $M=\Sigma \times [-L,L]$, with $L>0$,  endowed
  with the product metric $g=g_0+dt^2$. For every  $\varepsilon>0$,
  sufficiently small, there exists a Riemannian metric $g_\varepsilon=
  h_\varepsilon^2(t)g_0+dt^2$    which coincides with $g$ in a
  neighborhood of  {the boundary $\Sigma\times\{-L,L\}$} in $M$  and such
  that 
  $$\sigma_2(g_\varepsilon) \ge \frac C \varepsilon,$$ 
  where $C= \frac 14\min( \lambda_2(\Sigma), \frac 1{6})$. The distance between the two boundary components is independent of $\varepsilon$ and may be chosen arbitrarily small.
\end{thm}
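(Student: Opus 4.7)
The plan is to use the warped-product ansatz $g_\varepsilon = h_\varepsilon^2(t)\, g_0 + dt^2$, where $h_\varepsilon:[-L,L]\to[1,\infty)$ is smooth, identically $1$ on the boundary strips $[-L,-L+\varepsilon]$ and $[L-\varepsilon,L]$ (so that $g_\varepsilon=g$ in a neighborhood of $\partial M$), and identically equal to a large value $H_\varepsilon$ on the middle interval $[-L+2\varepsilon, L-2\varepsilon]$, with $H_\varepsilon\to\infty$ as $\varepsilon\to 0$ at a rate to be fixed. A direct computation gives $dv_{g_\varepsilon}=h_\varepsilon^n\,dt\,dv_{g_0}$ and $|df|^2_{g_\varepsilon}=h_\varepsilon^{-2}|d_0 f|^2 + f_t^2$. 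Pick an $L^2(\Sigma,g_0)$-orthonormal basis $\{\phi_j\}$ of Laplace eigenfunctions with eigenvalues $0=\lambda_1<\lambda_2(\Sigma)\leq\lambda_3\leq\cdots$ (here the connectedness of $\Sigma$ is used). Expanding $f(x,t)=\sum_j a_j(t)\phi_j(x)$ turns the Rayleigh quotient into
$$
R_\varepsilon(f)=\sum_{j\geq 1}\int_{-L}^{L}\bigl(h_\varepsilon^n(t)\,a_j'(t)^2 + \lambda_j\, h_\varepsilon^{n-2}(t)\, a_j(t)^2\bigr)\,dt.
$$
The boundary normalization reads $\sum_{j\geq 1}\bigl(a_j(-L)^2 + a_j(L)^2\bigr)=1$, and orthogonality to constants on $\partial M$ yields $a_1(-L)+a_1(L)=0$.

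For every mode $j\geq 2$ I would run the same dichotomy used in Proposition~\ref{Conf1}, applied near each boundary component separately. Either $|a_j(t)|\geq |a_j(\pm L)|/2$ throughout the warped middle region, in which case the potential term $\lambda_j h_\varepsilon^{n-2}a_j^2$ (integrated over that region, where $h_\varepsilon^{n-2}$ is large) contributes at least $\frac{\lambda_2(\Sigma)}{4\varepsilon}\bigl(a_j(-L)^2+a_j(L)^2\bigr)$ once $H_\varepsilon^{n-2}$ is chosen large enough; or $|a_j|$ drops by half somewhere, in which case a Cauchy--Schwarz partition of $\int h_\varepsilon^n(a_j')^2\,dt$ between the boundary strip (where $h_\varepsilon=1$) and the adjacent warped piece (where $h_\varepsilon=H_\varepsilon$) gives the same $1/\varepsilon$ scaling. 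The hypothesis $n\geq 3$ enters precisely through $n-2\geq 1$, needed for $h_\varepsilon^{n-2}$ to blow up with $H_\varepsilon$.

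The genuinely new case is the constant-on-$\Sigma$ mode $j=1$: since $\lambda_1=0$, only the kinetic energy $\int h_\varepsilon^n(a_1')^2\,dt$ remains, yet the coupling $a_1(-L)=-a_1(L)$ imposed by orthogonality to constants forces a total variation $|a_1(L)-a_1(-L)|=2|a_1(L)|$ across $[-L,L]$. Splitting the integral at $t=\pm(L-\varepsilon)$ into three pieces of weights $\varepsilon$, $(2L-2\varepsilon)/H_\varepsilon^n$, $\varepsilon$ and using the standard identity $\min\bigl\{x^2/a + y^2/b + z^2/c:\,x+y+z=d\bigr\}=d^2/(a+b+c)$ gives
$$
\int_{-L}^{L}h_\varepsilon^n(a_1')^2\,dt\geq\frac{4\,a_1(L)^2}{2\varepsilon + (2L-2\varepsilon)/H_\varepsilon^n}.
$$
Choosing $H_\varepsilon^n$ so that $(2L-2\varepsilon)/H_\varepsilon^n$ is dominated by $\varepsilon$ makes the right-hand side at least $\frac{1}{\varepsilon}\bigl(a_1(-L)^2+a_1(L)^2\bigr)$ up to the absolute constant producing the $1/6$ in the statement.

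Summing the per-mode lower bounds and using $\sum_j\bigl(a_j(-L)^2+a_j(L)^2\bigr)=1$ yields $R_\varepsilon(f)\geq C/\varepsilon$ for all admissible $f$, and the min-max principle then gives the claimed bound on $\sigma_2(g_\varepsilon)$. The distance between the two boundary components in the warped metric is $\int_{-L}^{L}dt=2L$, independent of $\varepsilon$ and freely chosen at the outset. The main obstacle I anticipate is the simultaneous calibration of $H_\varepsilon$: it must be large enough for the $j=1$ step (requiring $H_\varepsilon^n \varepsilon\gtrsim L$) and for the ``large $|a_j|$'' case of modes $j\geq 2$ (requiring $H_\varepsilon^{n-2}L\gtrsim 1/\varepsilon$), while producing clean absolute constants compatible with $C=\tfrac{1}{4}\min(\lambda_2(\Sigma),1/6)$; both constraints are jointly satisfiable once $n\geq 3$ and $\varepsilon$ is small.
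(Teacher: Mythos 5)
Your proposal follows essentially the same route as the paper: the warped ansatz $h_\varepsilon^2(t)g_0+dt^2$ with $h_\varepsilon\equiv 1$ near $t=\pm L$ and $h_\varepsilon\equiv H_\varepsilon$ in the middle, the separated Rayleigh quotient $R_\varepsilon(f)=\sum_j\int(a_j'^2h_\varepsilon^n+\lambda_j a_j^2 h_\varepsilon^{n-2})\,dt$, the per-boundary kinetic/potential dichotomy for modes $j\geq 2$ (where $n\geq 3$ is used exactly as you say, to make $h_\varepsilon^{n-2}$ large), and the observation that orthogonality to constants forces $a_1(-L)=-a_1(L)$ so that the $j=1$ mode cannot cheat.

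The one place you deviate is the $j=1$ mode: the paper runs a three-way case analysis (change near $-L$, change near $L$, or neither, finishing with Cauchy--Schwarz on the thick middle), whereas you apply the three-interval Cauchy--Schwarz identity $\min\{x^2/a+y^2/b+z^2/c:\,x+y+z=d\}=d^2/(a+b+c)$ once, with the coupling $a_1(L)-a_1(-L)=2a_1(L)$ as the fixed total variation. This is cleaner and produces the same $1/\varepsilon$ bound without branching. Two small remarks: (i) your stated middle weight $(2L-2\varepsilon)/H_\varepsilon^n$ ignores the two transition layers on $[-L+\varepsilon,-L+2\varepsilon]\cup[L-2\varepsilon,L-\varepsilon]$ where $h_\varepsilon$ is only $\geq 1$, so the correct bound is $\int_{-L+\varepsilon}^{L-\varepsilon}h_\varepsilon^{-n}\,dt\leq 2\varepsilon+(2L-4\varepsilon)/H_\varepsilon^n$; this only shifts the absolute constant and is harmless once $H_\varepsilon^n\gtrsim L/\varepsilon$. (ii) The paper simply takes $H_\varepsilon=\varepsilon^{-2}$ and only integrates the potential term over a slice of width $\varepsilon$, which automatically satisfies both of your calibration constraints when $n\geq 3$; specifying this choice would close the proposal without the need to verify joint satisfiability.
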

{
The condition $n\geq 3$ is necessary. See Remark \ref{rem:necessity} below.
}

\begin{proof}[{Proof of Theorem \ref{thm:warpedprod}}] Given $0<\varepsilon <\frac 14\min(L,\frac 1L)$, let $h_\varepsilon: [-L,L]\to[1,+\infty)$ be a smooth even function such that
\begin{equation}\nonumber
h_\varepsilon(t) =\left\{
\begin{array}{lll}
1 \qquad\;  \text{ in }  [-L, -L+\varepsilon]\cup [L-\varepsilon, L],\\
\varepsilon^{-2} \qquad\;  \text{ in } [ -L+2\varepsilon, L-2\varepsilon].
\end{array}
\right.
\end{equation}
\medskip
On the manifold $M=\Sigma \times [-L,L]$, {define the metric $g_\varepsilon $ by}
$$
g_\varepsilon(t,p)= h^2_\varepsilon(t) g_{0} +dt^2.
$$Let $f$ be a smooth function on $M=\Sigma \times [-L,L]$ with $\int_{\partial M} f\, dv_{g_0}=0$ and such that $\int_{\partial M}f^2\, dv_{g_0}=1$. As before, we develop the function $f$ in Fourier series 
\begin{equation}\label{conf0} 
f(x,t)=\sum_{j\ge 1} a_j(t)\phi_j(x)
\end{equation} 
where $\{\phi_j\}_{j\in \N}$ is an orthonormal basis of eigenfunctions of the Laplacian on $\Sigma$, with $\Delta\phi_j=\lambda_j\phi_j$.
The eigenfunction corresponding to $\lambda_1=0$ is the constant function $\phi_1$ on $\Sigma$ and $\int_\Sigma \phi_j\, dv_{g_0}=0$ for all $j\ge 2$.
The conditions $\int_{\partial M} f\, dv_{g_0}=\int_{\Sigma} (f(x,L)+f(x,-L))\, dv_{g_0}=0$ and $\int_{\partial M}f^2\, dv_{g_0}=\int_{\Sigma} (f(x,L)^2+f(x,-L)^2)\, dv_{g_0}=1$ amount to 
$$a_1(-L)+a_1(L)=0 \quad ,\quad  \sum_{j=1}^{\infty}(a^2_j(-L)+a^2_j(L))=1.$$
From
$
df(x,t)= \sum_{j=1}^{\infty} (a_j'(t)\phi_i(x)\, dt+a_j(t)\,d_0\phi_i(x))
$
we get
$$
R_\varepsilon(f)=\int_{ M} \vert df\vert_{g_\varepsilon}^2\, dv_{g_\varepsilon}$$
where
$$|df|_{g_\varepsilon}^2=\sum_ja_j'(t)^2\phi_j(x)^2+h_\varepsilon^{-2}(t)a_j(t)^2|d_0\phi_j(x)|_{g_0^2},$$
and $dv_{g_\varepsilon}=h_\varepsilon^n\,dv_g$ so that
$$
R_\varepsilon(f)= \sum_{j=1}^{\infty}\int_{-L}^{L}(a_j'^2(t)h^n_\varepsilon(t)+\lambda_j a_j^2(t) h^{n-2}_\varepsilon(t))\,dt.
$$

We set $R^j_\varepsilon(f)=\int_{-L}^{L}(a_j'^2(t)h^n_\varepsilon(t)+\lambda_j a_j^2(t) h^{n-2}_\varepsilon(t))\,dt$ so that $R_\varepsilon(f)= \sum_{j=1}^{\infty}R^j_\varepsilon(f)$.

\smallskip
{\bf Step 1} : For all $j\ge 2$, we will prove that
$$
R^j_\varepsilon(f)\ge  \frac A{\varepsilon}  (a^2_j(-L)+ a^2_j(L)) 
$$
with $A= \min(\frac {\lambda_2}{4}, \frac 1{12})$.

Indeed, let us fix an integer $j$. If there exists  $t_0 \in [-L+2\varepsilon,-L+{3}\varepsilon]$ with  $\vert  a_j(t_0)-a_j(-L) \vert\ge \frac12\vert a_j(-L)\vert $, then,  using the Cauchy-Schwarz inequality
\begin{align}\label{cyl5} 
\int_{-L}^{0}a_j'^2(t) h^n_\varepsilon(t)\,dt &\ge \int_{-L}^{t_0} a_j'^2(t)\,dt\ge \frac 1{t_0+L}\left(\int_{-L}^{t_0} a_j'(t)\,dt\right)^2\nonumber\\
&\ge 
\frac1{3\varepsilon}\left( a_j(t_0)-a_j(-L)\right)^2\ge\frac{1}{12\varepsilon}a^2_j(-L).
\end{align}
Otherwise, $\vert   a_j(t)-a_j(-L)\vert\le   \frac12\vert a_j(-L)\vert $ for all $t\in  [-L+2\varepsilon,-L+{3}\varepsilon]$, which implies  $\vert a_j(t)\vert \ge \frac 12\vert a_j(-L)\vert$  and then
\begin{equation}\label{cyl4} 
\int_{-L}^{0}\lambda_j a_j^2(t) h^{n-2}_\varepsilon(t)\,dt\ge \int_{-L+2\varepsilon}^{-L+3\varepsilon}  \lambda_j a^2_j(t)\varepsilon^{{-2(n-2)}}\,dt\ge \frac {\lambda_j}{4\varepsilon^{{2n-5}}}a^2_j(-L).
\end{equation} 
Thus, in all cases, we have for  $j\ge 2$ (with $\varepsilon^{{2n-5}}\le\varepsilon$ and $\lambda_j\ge\lambda_2$)
\begin{equation}\label{cyl6} 
\int_{-L}^{0}(a_j'^2(t)h^n_\varepsilon(t)+\lambda_j a_j^2(t) h^{n-2}_\varepsilon(t)) dt\ge  \frac A{\varepsilon}a^2_j(-L)
\end{equation}
with $A= \min(\frac {\lambda_2}{4}, \frac 1{12})$.
The same arguments lead to
\begin{equation}\label{cyl6'} 
\int_{0}^{L}(a_j'^2(t)h^n_\varepsilon(t)+\lambda_j a_j^2(t) h^{n-2}_\varepsilon(t)) dt\ge  \frac A{\varepsilon}a^2_j(L).
\end{equation}
Combining \eqref{cyl6} and \eqref{cyl6'}, we get for all $j\ge 2$,
\begin{equation}\label{cyl7} 
R^j_\varepsilon(f)\ge  \frac A{\varepsilon}  (a^2_j(-L)+ a^2_j(L)). 
\end{equation}

{\bf Step 2} For $j=1$, we will also show that
$$R^1_\varepsilon(f)\ge \frac 1{24 \varepsilon}(a^2_1(-L)+ a^2_1(L)).$$
Indeed, recall that we have $a_1(-L)=-a_1(L)$. Using \eqref{cyl5}, we see that if there exists  $t_0 \in [-L+2\varepsilon,-L+{3}\varepsilon]$ with  $\vert  a_1(t_0)-a_1(-L) \vert\ge \frac12\vert a_1(-L)\vert $, then  
$$R^1_\varepsilon(f)\ge\int_{-L}^{0}a_1'^2(t) h^n_\varepsilon(t)\,dt \ge  
\frac{1}{12\varepsilon}a^2_1(-L)= \frac{1}{24\varepsilon}(a^2_1(-L)+ a^2_1(L)).$$
Similarly, if there exists $t_1 \in [L-3\varepsilon,L-2\varepsilon]$ with  $\vert  a_1(t_1)-a_1(L) \vert\ge \frac12\vert a_1(L)\vert $), then  
$$R^1_\varepsilon(f)\ge \int_{0}^{L}a_1'^2(t) h^n_\varepsilon(t)\,dt \ge  
\frac{1}{12\varepsilon}a^2_1(L)= \frac{1}{24\varepsilon}(a^2_1(-L)+ a^2_1(L)).$$
 
Now, assume that we have both
$\vert a_1(t)-a_1(-L) \vert \le \frac{1}{2}\vert a_1(-L)\vert$  for all  $t \in [-L+2\varepsilon,-L+{3}\varepsilon]$  and 
$\vert a_1(L)-a_1(t) \vert \le \frac{1}{2}\vert a_1(L)\vert$ for all $t \in [L-3\varepsilon,L-2\varepsilon]$. Using the Cauchy-Schwarz inequality we get
$$R^1_\varepsilon(f)\ge \int_{-L+{3}{\varepsilon}}^{L- {3}{\varepsilon}} a_1'^2(t)h^n_\varepsilon(t)dt =  
 \varepsilon^{-2n}  \int_{-L+{3}{\varepsilon}}^{L- {3}{\varepsilon}} a_1'^2(t) dt  \ge \frac{\varepsilon^{-2n} }{2L-6\varepsilon} \left(\int_{-L+{3}{\varepsilon}}^{L- {3}{\varepsilon}} a_1'(t) dt \right)^2
$$
$$
 \ge \frac{\varepsilon^{-2n} }{2L} \left(a_1(L- {3}{\varepsilon} ) - a_1(-L+{3}{\varepsilon})   \right)^2
$$
Assume w.l.o.g. $a_1(L)>0$. Then, under the conditions above, we have 
$$a_1(L- {3}{\varepsilon} ) \ge a_1(L) -\vert a_1(L)-a_1(L- {3}{\varepsilon} ) \vert \ge \frac{1}{2} a_1(L)$$ 
and
$$a_1(-L+ {3}{\varepsilon} ) \le a_1(-L) +\vert a_1(-L+ {3}{\varepsilon} ) -a_1(-L) \vert \le a_1(-L) +\frac{1}{2} \vert a_1(-L)\vert=- \frac{1}{2} a_1(L).$$ 
Therefore, $a_1(L- {3}{\varepsilon} ) - a_1(-L+{3}{\varepsilon})\ge a_1(L)$ and, then
$$R^1_\varepsilon(f)\ge \frac{\varepsilon^{-2n} }{2L}a^2_1(L)= \frac{\varepsilon^{-2n} }{4L}(a^2_1(-L)+ a^2_1(L))\ge \frac1\varepsilon (a^2_1(-L)+ a^2_1(L))$$

 \smallskip
In conclusion, we have 
$$R_\varepsilon(f)=\sum_{j\ge 1}R^j_\varepsilon(f)\ge \frac C\varepsilon \sum_{j\ge 1}(a^2_j(-L)+ a^2_j(L))=\frac C\varepsilon$$
with $C=\min(A,\frac 1{24})= \min(\frac {\lambda_2}{4}, \frac 1{24})$.
\end{proof}

{
	\begin{rem}\label{rem:necessity}
		For $n=1$, it follows from Kokarev's bound \eqref{ineq:kokarev} that $$\sigma_2(\Sigma\times[-L,L],g))\leq \frac{4\pi}{\mbox{length}(\Sigma)}.$$
		For $n=2$, and any Riemannian metric of the form $h^2g_0+dt^2$, with $h\equiv 1$ on $\Sigma\times\{-L,L\}$, the following holds:
		$$\sigma_2\leq 2L\lambda_2.$$
		Indeed, in this case one could use the function $f(x,t)=a_2(t)\phi_2(x)$ as a test function and obtain
		$$R(f)=\int_{-L}^L(a_2'^2(t)h^2(t)+\lambda_2a_2^2(t)\,dt.$$
		Using $a_2\equiv 1$ leads to the claimed inequality.
\end{rem}}

\medskip
\begin{rem}
This example shows that if we are far from being a product'', then, immediately, large eigenvalues could appear. Also,  in our construction, the natural projection from $(M,g_\varepsilon)\to [-L,L]$ is   a Riemannian submersion on $[-L,L]$. This implies that $2L$ is also the distance between the two boundaries in $(M,g_\varepsilon)$, for any $\varepsilon$. It is interesting to see that this distance could be very small, without small eigenvalues.
\end{rem}

\subsection*{Acknowledgments}
{The authors are grateful to the anonymous referee for pointing out a mistake in the original proof of Theorem \ref{thm:warpedprod}, which lead to an improvement of the result.}

\bibliographystyle{plain}
\bibliography{biblioCEG}

\end{document}